\documentclass{article}

\usepackage{arxiv}

\usepackage[utf8]{inputenc} % allow utf-8 input
\usepackage[T1]{fontenc}    % use 8-bit T1 fonts
\usepackage{hyperref}       % hyperlinks
\usepackage{url}            % simple URL typesetting
\usepackage{booktabs}       % professional-quality tables
\usepackage{amsfonts}       % blackboard math symbols
\usepackage{nicefrac}       % compact symbols for 1/2, etc.
\usepackage{microtype}      % microtypography
\usepackage{lipsum}

\usepackage{amssymb,amsthm,amsmath,color,makeidx}

\newcommand{\Plm}{(P_1)} 
\newcommand{\intA}{\int_{\R^N}(|\nabla_A u_{\lambda,\mu}|^2+u_{\lambda,\mu}^2)dx} 
\newcommand{\intf}{\int_{\R^N}a_{\lambda}(x)| u|^q dx}
\newcommand{\intg}{\int_{\R^N}b_{\mu}(x)| u|^p dx} 
\newcommand{\jf}{J_{\lambda,\mu}} 
\newcommand{\ume}{u_{\lambda,\mu}^-}
\newcommand{\uma}{u_{\lambda,\mu}^+}

\newcommand{\Ho}{\mbox{$H^1_0 (\R^N)$}}
\newcommand{\HA}{\mbox{$H^1_A (\R^N)$}}
\newcommand{\HAo}{\mbox{$H_A^1(\R^N)\setminus \{0\} $}}
\newcommand{\C}{\mathbb{C}}
\newcommand{\N}{\mathbb{N}}
\newcommand{\R}{{\mathbb R}}

\newtheorem{theorem}{Theorem}[section]%[chapter]

\newtheorem{remark}[theorem]{Remark}
\newtheorem{prop}[theorem]{Proposition}
\newtheorem{lemma}[theorem]{Lemma}

\title{Existence at least four solutions for a Schrödinger equation with magnetic potential involving sign-changing weight function}

\author{
	 de Paiva, Francisco Odair Vieira \thanks{F.O.V.P. received research grants from FAPESP 17/16108-6. } \\
	Departamento de Matem\'atica, UFSCar\\
	S\~{a}o Carlos, SP,  13560-970 Brazil\\
	\texttt{franciscoodair@gmail.com} \\
	%% examples of more authors
	\And
	 de Souza Lima, Sandra Machado \thanks{S.M.S.L. was supported by CAPES/Brazil and the paper was completed while the second author was visiting the Departament of Mathematics of UFJF, whose hospitality she gratefully acknowledges.  }\\
	Departamento de Ciências Exatas, Biológicas e da Terra\\ INFES-UFF\\
	Santo Antônio de Pádua - RJ, 28470-000, Brazil\\
	\texttt{sandra.msouzalima@gmail.com} \\
	\And
	Miyagaki, Olimpio Hiroshi  \thanks{ O. H. M. received research grants from CNPq/Brazil 307061/2018-1, FAPEMIG CEX APQ 00063/15 and INCTMAT/CNPQ/Brazil. }\\
	Departamento de Matemática, UFJF\\
	Juiz de Fora, MG, 36036-900, Brazil\\
	\texttt{Corresponding author: {ohmiyagaki@gmail.com}} \\
}

\begin{document}
	\maketitle
	
	\begin{abstract}
		In this paper we consider the following class of elliptic problems
		$$- \Delta_A u + u = a_{\lambda}(x) |u|^{q-2}u+b_{\mu}(x) |u|^{p-2}u ,$$
		for $x \in \R^N$, $1<q<2<p<2^*-1= \frac{N+2}{N-2}$, $a_{\lambda}(x)$ is a sign-changing weight function, $b_{\mu}(x)$ has some aditional conditions, $u \in H^1_A(\R^N)$ and $A:\R^N \rightarrow\R^N$ is a magnetic potential. Exploring the Bahri Li argument and some preliminar results we will discuss the existence of four solution to the problem in question.
	\end{abstract}

	% keywords can be removed
	\keywords{sign-changing weight functions \and magnetic potential \and Nehari Manifold \and Fibering map}

	\section{Introduction}
	In this work we are interested in studying the existence of a fourth solution for the following classes of concave-convex elliptical problem
	$$
	\left\{ \begin{array} [c]{ll}
	- \Delta_A u + u = a_{\lambda}(x) |u|^{q-2}u+b_{\mu}(x) |u|^{p-2}u   \, \, \mbox{in} \, \,
	\R^N, & \\
	u \in \HA,&\\
	
	\end{array}
	\right.\leqno {\Plm}
	$$
	\noindent where $N\geq 3 $, $-\Delta_A =(-i\nabla+A)^2$, $1<q<2<p<2^*= \frac{2N}{N-2}$, $a_{\lambda}(x)$ is a family of functions that can change signal, $b_{\mu}(x)$ is continuous and satisfies some additional conditions, $u
	:\R^N \rightarrow \C$ with
	$u \in H^1_A(\R^N)$ (such space will be defined later),  $\lambda > 0$ and $\mu > 0 $ are real parameters, $u:\R^N \rightarrow \C$ and $A:\R^N \rightarrow\R^N$ is a magnetic potential in $L^2_{loc}(\R^N,\R^N)$.
	
	In \cite{doc1} the authors show the existence of three solutions for this problem and also prove their regularity. In this case we will show the existence of fourth solution. 
	Many works have been developed with the magnetic laplaciano. Its importance in physics was discussed e. g. in Alves and Figueiredo \cite{ClauFig} and in Arioli and Szulkin\cite{ArSz}.
	
	There are so many works in literature with  similar problem to $(P_1)$ with $ A = 0 $ like in Ambrosetti, Brezis and Cerami \cite{AmbBre}, where the following problem was considered
	$$
	\left\{ \begin{array}[c]{ll}
	- \Delta  u + u =  \lambda u^{q-1}+  u^{p-1} \, \, \mbox{in} \, \, \Omega, & \\
	u> 0 \, \, \mbox{in}\, \, \Omega,&\\
	u= 0 \, \, \mbox{in}\, \, \partial\Omega,&\\
	\end{array}
	\right.
	$$
	where $\Omega $ is a bounded regular domain of $\R^N$ ($N\geq 3$),  with smooth boundary and $1<q<2<p\leq2^*$.
	Combining the method of sub and super-solutions with the variational method, the authors proved the existence of a certain $\lambda_0>0$ such that there are two solutions when $\lambda\in (0,\lambda_0)$, one solutions if $\lambda=\lambda_0$ and no solutions if $\lambda>\lambda_0$. 
	
	The concave-convex problem like
	$$
	\left\{ \begin{array}[c]{ll}
	- \Delta  u + u = \lambda f(x) u^{q-1}+  u^{p-1} \, \, \mbox{in} \, \, \Omega, & \\
	u> 0 \, \, \mbox{in}\, \, \Omega,&\\
	u= 0 \, \, \mbox{in}\, \, \partial\Omega,&\\
	\end{array}
	\right.
	$$
	with $f\in C(\overline{\Omega})$ a sign changing function and $1<q<2<p\leq2^*$, was studied by Wu in \cite {Wu2}. It proves that the problem has at least two positive solutions for values of $ \lambda $ small enough. Therefore, many studies have been devoted to the analysis of existence and multiplicity of concave-convex elliptic problems in bounded domains, for instance, we can cite Brown \cite{B}; Brown and Wu \cite{BW}; Brown and Zhang \cite{BZ2003}; Hsu \cite{Hsu1}; Hsu and Lin \cite{Hsu} and references contained in these articles.
	
	In an unbounded domain we can cite Chen \cite{Chen}, Huang, Wu and Wu \cite{HWW}, who have worked with a similar cases in $\R^N.$ In \cite{Wu}, Wu deals with the problem
	$$
	\left\{ \begin{array}[c]{ll}
	-\Delta  u + u = f_{\lambda }(x) u^{q-1}+  g_{\mu}u^{p-1} \, \, \mbox{in} \, \, \R^N, & \\
	u\geq 0 \, \, \mbox{in}\, \, \R^N,&\\
	u \,\, \in \,\, H^1(\R^N),&\\
	\end{array}
	\right.
	$$
	with $1<q<2<p\leq2^*$, $g_{\mu}\geq 0$ or $f_{\lambda}$ being able to change of signal, among other additional hypotheses. It seeks to show the existence of at least four solutions to the problem when $\lambda$ and $\mu$ small enough.
	This result was extend in \cite{doc1}, investigating if it would be possible to obtain similar consequences when we replace the magnetic laplacian in the place of the usual Laplacian. In this work we will show the existence of the fourth solution for this problem.

	The first results in non-linear Schr\"{o}dinger equations, with $ A \neq 0 $ can be atributed to Esteban and Lions \cite{EstLions} in which the existence of stationary solutions for equations of the type
	$$-\Delta_A+Vu =|u|^{p-2}u, u\neq 0, u\in L^2(\R^N)  , $$
	with $ V = 1 $ and $p \in (2,\infty),$ were obtained using minimization method with constant magnetic field and also for the general case.
	
	Chabrowski and Szulkin \cite{ChabSzul} worked with this operator in the critical case and with the electric potential V being able to change the signal. Already Cingolani, Jeanjean and Secchi \cite{Cingolani2} consider the existence of mult-peak solutions in the subcritical case.
	
	A problem of the type
	$$-\Delta_A u = \mu |u|^{q-2}u+|u|^{2^*-2}u, u \neq  0,  \Omega \subset \R^N ,  $$
	with $\mu>0$ and $2\leq q <2^*$, is treated by Alves and Figueiredo \cite{ClauFig} in which the number of solutions with the topology of $ \Omega $ is related.

	In \cite{doc1} they deal with the non-zero $ A $ case with a weight function that changes sign in the concave-convex case, like the problem in this work. They prove the existence of three solutions for the problem and now, we would like to show the existence of the fourth solution. In \cite{doc1} was used the Nehari manifold linked with the behavior of functions known as fibering map and Category theory.

	In the sequence we will announce some preliminars results and the  result that we seek to show. Observe that
	\begin{equation}\label{funcional.1}
	\jf(u)=\frac{1}{2}\intA - \frac{1}{q} \intf - \frac{1}{p} \intg,
	\end{equation}
	is the functional associated with the problem $ \Plm $ and is of class $ C^1 $ in $ \HA $ as can be seen in \cite{Rab}. Also, the critical points of $\jf(u)$ are weak solutions of problem $(P_1)$. 
	We will work with the hypotheses that we will enunciate next. Consider the function $a(x) \in L^{q'}(\R^N),\; q'=\frac{p}{p-q}$ and $a_{\pm} =\pm \max\{\pm a(x),0\} \neq 0$. Let us assume
	$$a_{\lambda} (x)=\lambda a_+(x)+a_-(x).$$ 
	\begin{description}
		\item[  $(A)$ ] $  a(x) \in L^{q'}(\R^N),\; q'=\frac{p}{p-q}$ and exists $ \hat{c}>0$ and $r_{a_-}>0, $ such that
		$$a_-(x)>-\hat{c} \exp(- r_{a_-}|x|) \;\;\mbox{ for all  }\;\; x \in  \,\R^N. $$   
	\end{description}
	In addition to $(A)$, we will assume that $b_{\mu}(x)=b_1(x)+\mu b_2 (x)$, where 
	\begin{description}
		\item[ $(B_1)$  ]$ b_1(x)>0$ in continuous in $ \R^N $, with $b_1(x)\rightarrow 1$ as $|x|\rightarrow \infty$ and exists $r_{b_1}>0$, such that
		$$1\geq b_1(x) \geq 1-c_0\exp(-r_{b_1}|x|) \;\; \mbox{for some }\;\; c_0<1 \;\; \mbox{and for all } \;\; x \in \R^N.$$   
		\item[   $(B_2)$]    $  b_2(x)>0$ is continuous in $ \R^N $, $b_2(x)\rightarrow 0$ as $|x|\rightarrow \infty$ and exists $r_{b_2}>0$, with
		$r_{b_2}<\min\{  r_{a_-}, r_{b_1},q\}$ such that
		$$b_2(x)\geq d_0 \exp(-r_{b_2}|x|) \;\; \mbox{for some }\;\; d_0<1 \;\; \mbox{and for all } \;\; x \in
		\R^N.$$
	\end{description}
	Those hypotheses were used in \cite{doc1}. Consider
	$$\Upsilon_0=( 2-q )^{ 2-q } \left(  \frac{p-2}{ ||a_+||_{q'} }\right)^{p-2}   \left(  \frac{S_p
	}{p-q }\right)^{p-q}, \;\;\; \mbox{where} $$
	\begin{equation}\label{Sp} 
	S_p=\inf_{u\in H_A^1(\R^N \setminus \{0\})}
	\frac{\left(\int_{\R^N}|\nabla_Au|^2+u^2dx\right)^{\frac{1}{2}}}{\left(\int_{\R^N}|u|^pdx\right)^{\frac{2}{p}}}>0.
	\end{equation}

	In \cite{doc1} the first result, assuming the hypotheses $ (A), \; (B_1) $ and $ (B_2) $, and $ \Upsilon_0 $ as defined above, it was proved that $\Plm$ has at least one solution, provided that 
	\begin{equation}\label{des.do.teo}
	\lambda^{p-2}(1+\mu||b_2||_{\infty})^{2-q}< \left(\frac{q}{2}\right)^{p-2} \Upsilon_0
	\end{equation}
	holds for each $\lambda >0$ and $\mu>0$. Then, adding the hypothesis that the potential is asymptotic to a constant in infinity, they prove the existence of at least two solutions $\uma$ and $\ume$ with $\jf(\uma)<0<\jf(\ume).$

	In the previous result, the existence is valid for all  $ \lambda $ and $ \mu $ satisfying the inequality (\ref{des.do.teo}). So, if we additionally set values of $ \lambda $ and $ \mu $ conveniently small we obtain the multiplicity result, that is, the existence of at least three solutions. Actually they showed the existence of $\lambda_0 >0$ and $\mu_0>0$ with $\lambda_0^{p-2}(1+\mu_0||b_2||_{\infty})^{2-q}< \left(\frac{q}{2}\right)^{p-2}\Upsilon_0$, such that for all  $\lambda \in (0,\lambda_0)$ and $\mu \in (0,\mu_0)$, the problem  $\Plm$ has at least three solutions.
	
	Now, in this work, we observe that for the problem in question, the numbers $ \lambda_0 $ and $ \mu_0 $ as previously mentioned are independent of the value of $ a_-. $ However, considering some additional hypotheses and taking values of $||a_-||_{q'}$ sufficiently small we have the results getting another solution. Before enunciate this result we will present the following hypotheses:
	
	\begin{description}
		\item[ $(C_1)$  ] $b_1(x)< 1$ in $\R^N$ in a positive measure set;  
		\item[ $(C_2)$  ] $r_{b_1}>2.$  
	\end{description}

	\begin{theorem}\label{teo1.2} 	Suppose that the potential $A \rightarrow d$ where $d$ constant as $|x|\rightarrow \infty$. Assuming the hypotheses $(A)$, $(B_1)$, $(B_2)$, $(C_1)$ and $(C_2)$ there are positive values of $\tilde{\lambda_0}\leq \lambda_0$, $\tilde{\mu_0}\leq \mu_0$ and $ \nu_0$ such that for $\lambda \in (0, \tilde{\lambda_0})$, $\mu \in (0, \tilde{\mu_0})$ and $||a_-||_{q'} < \nu_0$,  the problem  $\Plm$ has at least four solutions. \end{theorem}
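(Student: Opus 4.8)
The plan is to produce the fourth solution as a higher critical point of $\jf$ obtained from a Bahri--Li (barycenter) minimax, taking the three solutions of \cite{doc1} as the baseline. First I would pass to the problem at infinity. Since $A\to d$ with $d$ constant, a gauge transformation (multiplication by a phase $e^{\pm i\,d\cdot x}$) turns $-\Delta_A$ into the ordinary $-\Delta$, so the limiting autonomous equation is $-\Delta u+u=|u|^{p-2}u$ on $H^1(\R^N)$, whose positive ground state $w$ and mountain-pass level $m_\infty=\left(\tfrac12-\tfrac1p\right)\|w\|^2$ serve as fixed reference objects. I would then recall from \cite{doc1} the Nehari decomposition $N_{\lambda,\mu}=N^+\cup N^-$ and that, for $\lambda,\mu$ small, \uma\ realizes $\inf_{N^+}\jf<0$ while \ume\ realizes $\inf_{N^-}\jf>0$; the quantitative input I want is $\alpha^-:=\jf(\ume)<m_\infty$ once $\lambda$, $\mu$ and $\|a_-\|_{q'}$ are small, so the known $N^-$-solution sits strictly below the first noncompact level.

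Next I would fix the compactness range. Via a splitting (global compactness) lemma, any $(PS)_c$ sequence for $\jf$ decomposes into a weak limit solving $\Plm$ plus finitely many translated copies of $w$, each carrying energy at least $m_\infty$; hence $\jf$ satisfies $(PS)_c$ for every $c<m_\infty$, and, using the strict inequalities furnished by $(C_1)$ and $(C_2)$, for $c<m_\infty+\delta$ with some small $\delta>0$. This is precisely where those hypotheses enter: $b_1<1$ on a positive-measure set together with $r_{b_1}>2$ guarantee that a bump cannot be glued at finite distance without a strictly positive energy penalty, keeping the relevant minimax inside the compact window.

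The heart of the argument is the test-function estimate. For $y\in\R^N$ with $|y|=R$ large I would project the translated ground state $w(\cdot-y)$ onto $N^-$ through the unique maximizing dilation $t_y>0$ and estimate $\jf\!\left(t_y\,w(\cdot-y)\right)$ against $m_\infty$. The energy-raising effect of $b_1<1$ (controlled, via $(C_2)$, by the exponential decay of $w$) competes with the energy-lowering effects of the term $\mu b_2$ and of the concave term carrying $a_+$; choosing $\lambda,\mu$ and $\|a_-\|_{q'}$ small keeps the whole family inside $\{\,\jf<m_\infty+\delta\,\}$, while the barycenter $\beta(u)=\|u\|_{L^p}^{-p}\int_{\R^N}x\,|u|^p\,dx$ satisfies $\beta\!\left(t_y\,w(\cdot-y)\right)\to y$. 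Thus $y\mapsto t_y\,w(\cdot-y)$ maps the sphere $\{|y|=R\}$ into $N^-\cap\{\jf<m_\infty+\delta\}$ so that $\beta$ composed with it is homotopic to the inclusion $S^{N-1}\hookrightarrow\R^N\setminus\{0\}$, which is not contractible.

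Finally I would run the minimax. Defining $c^{**}$ as the minimax of $\jf$ over the caps in $N^-$ spanning this noncontractible sphere, the topological obstruction forces $\alpha^-<c^{**}<m_\infty+\delta$, so $c^{**}$ lies in the compact window and is a critical value; the associated critical point is a fourth solution, distinct from \uma, \ume\ and the third solution of \cite{doc1} either by its energy level or by its nontrivial barycenter class. The main obstacle I anticipate is exactly this test-function estimate: one must show that the combined perturbation stays strictly below $m_\infty+\delta$ uniformly in $R$, i.e.\ that the positive penalty from $(C_1)$--$(C_2)$ dominates the negative contributions of $\mu b_2$ and $a_+$, and it is this balance that forces the quantitative choice of $\tilde{\lambda_0}\le\lambda_0$, $\tilde{\mu_0}\le\mu_0$ and $\nu_0$. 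A secondary difficulty is certifying that the new critical point is genuinely distinct, rather than a recovery of one of the three earlier solutions.
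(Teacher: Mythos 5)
Your overall architecture --- gauge the magnetic problem to the autonomous limit $-\Delta u+u=|u|^{p-2}u$, run a Bahri--Li minimax over fillings of a sphere of translated ground states projected onto $M^-_{\lambda,\mu}$, and conclude via a compactness window for $\lambda$, $\mu$, $\|a_-\|_{q'}$ small --- is indeed the paper's, but your bookkeeping of the minimax level, and with it your compactness claim, has a genuine gap. You assert that $\jf$ satisfies $(PS)_c$ for all $c<m_\infty+\delta$, crediting this to $(C_1)$--$(C_2)$. This fails already at $c=m_\infty$: the projected translates $t^-(w_k)w_k$ used in Lemma \ref{5.1.a} form a Palais--Smale sequence at level exactly $m_\infty$ with weak limit zero, and under $(C_1)$ the infimum $\inf_{M^-_{a_0,b_0}}J_{a_0,b_0}=m_\infty$ is not attained, so $m_\infty$ is a genuinely noncompact level sitting inside your claimed range. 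What the paper proves instead (Lemma \ref{Lemmadecompacidade}) is compactness only for sequences constrained to $M^-_{\lambda,\mu}$ at levels in the two-sided window $m^+_{\lambda,\mu}+m_\infty<\beta<m^-_{\lambda,\mu}+m_\infty$, the escaping-bubble alternative being excluded through the Nehari levels $m^+_{\lambda,\mu}<0<m^-_{\lambda,\mu}$; the hypotheses $(C_1)$--$(C_2)$ play no role in the compactness statement at all.

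More seriously, $(C_1)$ acts in the opposite direction from the one you assign it: because $b_1<1$ on a set of positive measure, the limit functional $J_{0,b_0}$ admits no minimum-energy solution, and this forces the unperturbed Bahri--Li level to satisfy $m_\infty<n_{0,b_0}<2m_\infty$ with a \emph{fixed} gap above $m_\infty$ (the unnumbered lemma preceding Theorem \ref{teo7.3}; the bound $n_{0,b_0}<2m_\infty$ is Bahri--Li's theorem \cite{BahriLi}). By the sandwich (\ref{7.2})--(\ref{7.3}), a consequence of Lemma \ref{Lemma7.1}, the perturbed level $n_{a_{\lambda},b_{\mu}}$ converges to $n_{0,b_0}$ as $\lambda,\mu,\|a_-\|_{q'}\rightarrow 0$, so it cannot be squeezed below $m_\infty+\delta$ for arbitrarily small $\delta$: your claim that the whole family of caps stays inside $\{\jf<m_\infty+\delta\}$ is incompatible with the very topological obstruction you invoke to bound the minimax from below. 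The paper's confinement is instead from above and exploits the collapse $m^-_{a_{\lambda},b_{\mu}}\rightarrow m_\infty$: for small parameters one obtains $m^+_{a_{\lambda},b_{\mu}}+m_\infty<m_\infty<n_{a_{\lambda},b_{\mu}}<2m_\infty-\epsilon<m^-_{a_{\lambda},b_{\mu}}+m_\infty$ (Theorem \ref{teo7.3}), which lands the level in the compact window. Two further discrepancies are minor but worth noting: the delicate finite-$R$ interaction estimate you anticipate as the main obstacle is replaced in the paper by the crude H\"older--Young comparison of fibered maxima in Lemma \ref{Lemma7.1}, and distinctness needs no barycenter class --- since $m^-_{\lambda,\mu}<m^+_{\lambda,\mu}+m_\infty$ (Proposition \ref{prop4.1}) and $m^+_{\lambda,\mu}<0$, the new level $n_{a_{\lambda},b_{\mu}}>m_\infty$ already exceeds $m^{\pm}_{\lambda,\mu}$, separating the fourth solution from the earlier ones by energy alone.
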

	
	For these first three solutions results of this problem was used the Nehari method together with the category theory. 
	We will continue to make use of variational methods to prove the above theorem. We will work under a few more assumptions to estimate different energy levels and will use the Bahri-Li min-max argument to show that for very small values of  $||a_-||_{q'}, $ the problem has at least four distinct solutions.

	\section{ Initial considerations}
	According to Tang \cite{TZ}, we denote by $ H_A (\R^N)$ the Hilbert space obtained by the closing of $ C_0^{\infty} (\R^N, \C) $ with following inner product:
	$$<u,v>_A=Re\int_{\R} (\nabla_Au \overline{\nabla_Av}+u\overline{v} dx ),$$
	where $\nabla_Au:=(D_1u,D_2u,...,D_Nu)$ and $D_j:=-i\partial_j-A_j(x) $, with $j=1,2,...,N$, with $A(x)=(A_1(x),...,A_N(x))$. The norm induced by this product is given by
	$$ ||u||_A^2:= \int_{\R}(|\nabla_Au|^2+u^2dx) .$$
	It is proved by Esteban and Lions, \cite[Section II]{EstLions} that for all $u \in H^1_A (\R^N)$ it is worth diamagnetic inequality
	$$|\nabla|u|(x)|=\left|Re\left(\nabla u \frac{\overline{u}}{|u|}\right)\right|=\left|Re\left((\nabla u-iAu)\frac{\overline{u}}{|u|}\right)\right|\leq |\nabla _Au(x)|$$
	So, if $u \in \HA $ we have that $|u|$ belongs to the usual Sobolev space $\Ho$.
	
	\subsection{Preliminar results}
	
	To obtain results of existence in this case, we introduced the Nehari manifold
	$$M_{\lambda, \mu}=\{u \in \HA\setminus\{0\} :\langle J'_{\lambda,\mu}(u),u\rangle =0\},$$
	where $\langle \;\;,\;\;\rangle$ denotes the usual duality between $\HA^*$ and $\HA$, where $\HA^*$ is the dual space to the corresponding $ \HA $ space.
	The Nehari manifold is linked to the functions of the form $F_u:t\rightarrow \jf(tu);\;\;(t>0)$, called fibering map. Note that the fabering map it was defined and depends on $ u $, $ \lambda $ and $ \mu $, so that proper notation would be $ F_{u, \lambda, \mu} $, but in order to simplify the notation, we will denote by $ F_u $.
	If $u \in  \HA$, we have
	\begin{equation}\label{phi}
	F _u(t)=\frac{t^2}{2} ||u||_A^2  -\frac{t^{q}}{q}\intf - \frac{t^{p}}{p}\intg,
	\end{equation}
	\begin{equation}\label{phi'}
	F' _u(t)= t ||u||_A^2 -  t^{q-1} \intf -  t^{p-1} \intg,
	\end{equation}
	\begin{equation}\label{phi''}
	F'' _u(t)= ||u||_A^2 - (q-1)t^{q-2}\intf -(p-1)t^{p-2}\intg.
	\end{equation}
	The following remark relates the Nehari manifold and the Fibering map.
	
	\begin{remark}\label{u_in_s}
		Let $ F_u $ be the application defined above and $ u \in \HA $, then:
		
		$(i)$ $ u \in M_{\lambda, \mu}$ if, and only if, $F_u'(1)=0$;
		
		$(ii)$ more generally $ tu \in M_{ \lambda, \mu} $, and only if, $F'_u (t)=0$.
	\end{remark}

	From the previous remark we can conclude that the elements in $ M_{\lambda, \mu}$, correspond to the critical points of the Fibering map. Thus, as $ F_u (t) \in C^2(\R^+, \R) $, we can divide the Nehari manifold into three parts
	$$ M_{\lambda, \mu}^+=\{ u \in M_{\lambda, \mu}; F''_{\lambda, \mu}(1)>0  \}; $$
	$$ M_{\lambda, \mu}^-=\{ u \in M_{\lambda, \mu}; F''_{\lambda, \mu}(1)<0  \}; $$
	$$ M_{\lambda, \mu}^0=\{ u \in M_{\lambda, \mu}; F''_{\lambda, \mu}(1)=0  \}. $$

	Lemma below shows us under some conditions the $M^0_{\lambda, \mu}$ is empty.
	
	\begin{lemma}\label{N0}
		Let $\mu\geq 0$ and $\lambda>0$ such that
		\begin{equation}\label{lambda}
		\lambda^{p-2}(1+\mu||b_2||_{\infty})^{2-q} < \Upsilon_0.
		\end{equation}
		Then $M^0_{\lambda, \mu}=\emptyset$.
	\end{lemma}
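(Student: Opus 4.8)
The plan is to argue by contradiction, converting membership in $M^0_{\lambda,\mu}$ into two scalar identities and then showing these are incompatible with \eqref{lambda}. Suppose $M^0_{\lambda,\mu}\neq\emptyset$ and fix $u\in M^0_{\lambda,\mu}$; in particular $u\neq 0$, so $\|u\|_A^2>0$. Set $I_a=\intf$ and $I_b=\intg$. By Remark~\ref{u_in_s} and the definition of $M^0_{\lambda,\mu}$ we have $F_u'(1)=0$ and $F_u''(1)=0$, which by \eqref{phi'} and \eqref{phi''} read
\begin{align*}
\|u\|_A^2 &= I_a+I_b,\\
\|u\|_A^2 &= (q-1)I_a+(p-1)I_b.
\end{align*}
Solving this $2\times 2$ linear system for $I_a$ and $I_b$ in terms of $\|u\|_A^2$ gives
$$I_a=\frac{p-2}{p-q}\|u\|_A^2,\qquad I_b=\frac{2-q}{p-q}\|u\|_A^2,$$
and since $1<q<2<p$ both coefficients are strictly positive.

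Next I would bound $I_a$ and $I_b$ from above in terms of $\|u\|_A$. Because $a_-\le 0$, its nonpositive contribution can be discarded: $I_a\le\lambda\int_{\R^N}a_+(x)|u|^q\,dx$, and Hölder's inequality with exponents $q'=\frac{p}{p-q}$ and $\frac{p}{q}$, together with the embedding encoded in \eqref{Sp}, yields $I_a\le\lambda\|a_+\|_{q'}S_p^{-q/2}\|u\|_A^{q}$. Similarly, by $(B_1)$ we have $b_1\le 1$, hence $b_\mu\le 1+\mu\|b_2\|_\infty$ pointwise, and the same embedding gives $I_b\le(1+\mu\|b_2\|_\infty)S_p^{-p/2}\|u\|_A^{p}$. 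Combining each of these with the corresponding exact value above and dividing by the appropriate power of $\|u\|_A>0$ produces
$$\|u\|_A^{2-q}\le\frac{p-q}{p-2}\,\lambda\|a_+\|_{q'}S_p^{-q/2},\qquad \|u\|_A^{p-2}\ge\frac{2-q}{p-q}\,\frac{S_p^{p/2}}{1+\mu\|b_2\|_\infty}.$$

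Finally I would eliminate $\|u\|_A$: raising the first inequality to the power $p-2$ and the second to the power $2-q$, both sides then carry the common factor $\|u\|_A^{(2-q)(p-2)}$, and the powers of $S_p$ combine to $\tfrac{p(2-q)+q(p-2)}{2}=p-q$. Cross-multiplying and reassembling all the constants leads exactly to
$$\lambda^{p-2}(1+\mu\|b_2\|_\infty)^{2-q}\ge (2-q)^{2-q}\left(\frac{p-2}{\|a_+\|_{q'}}\right)^{p-2}\left(\frac{S_p}{p-q}\right)^{p-q}=\Upsilon_0,$$
which contradicts \eqref{lambda}. Hence no such $u$ exists and $M^0_{\lambda,\mu}=\emptyset$. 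I expect the only delicate point to be the bookkeeping in this last step, namely keeping track of the powers of $S_p$, $(p-q)$, $(p-2)$ and $(2-q)$ so that they recombine precisely into the constant $\Upsilon_0$; by contrast, the sign-changing character of $a$ causes no difficulty, since the $a_-$ term only makes the upper estimate for $I_a$ more favorable.
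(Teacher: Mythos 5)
Your proof is correct and takes essentially the same route as the paper: the paper's own proof of Lemma~\ref{N0} simply defers to \cite[Lemma~2.2]{BW}, and that cited argument is precisely your computation --- extract the two identities $F_u'(1)=F_u''(1)=0$, solve the $2\times 2$ system for $\intf$ and $\intg$, bound them using $a_-\le 0$, $b_\mu\le 1+\mu\|b_2\|_\infty$, H\"older and the Sobolev constant $S_p$, and eliminate $\|u\|_A$ to contradict \eqref{lambda}. Your constant bookkeeping also checks out: the exponents of $S_p$ combine to $p-q$ and the factors $(2-q)^{2-q}$, $(p-2)^{p-2}$, $(p-q)^{-(p-q)}$ reassemble exactly into $\Upsilon_0$.
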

	\begin{proof}The proof is similar to what was done in \cite[Lemma 2.2]{BW}.
		
	\end{proof}
	
	In \cite{doc1} they showed that under certain conditions on $\lambda$ and $\mu$, we have a minimizer in $M^+_{\lambda,\mu}$ and another in $M^-_{\lambda,\mu}$, whose minimum levels of energy will be denoted respectively by
	$$m^+_{\lambda,\mu}=\inf_{u \in M^+_{\lambda,\mu}}\jf (u) $$
	and
	$$m^-_{\lambda,\mu} =\inf_{u \in M^-_{\lambda,\mu}}\jf (u)  .$$
	
	To establish the existence of the first two solutions and compare with the energy level of the fourth solution, we will need the following result that was shown in \cite{doc1}.
	
	\begin{lemma}\label{Nehari}
		For each $u \in  \HA \setminus\{0\}$ and $\mu >0$ we have
		
		\begin{description}
			\item[ 	$(i)$  ]    If $\intf \leq 0$, there is a single $t^-(u) > t_{\max}(u)$ such that $t^-(u)u \in M^-_{\lambda,\mu}$. Also, $F_{u}(t)$ is increasing in $(0,t^-(u))$, decreasing in $(t^-(u),+\infty)$ and $F_{u}(t)\rightarrow -\infty$ as $t\rightarrow +\infty$.
			\item[ $(ii)$  ]   If $\intf > 0$ and $\lambda$ is such that $\lambda^{p-2}(1+\mu||b_2||_{\infty})^{2-q}<\Upsilon_0$, so there is $0<t^+(u) < t_{\max}(u)<t^-(u) $ such that $t^\pm(u)u \in M^\pm_{\lambda,\mu}$. Also, $F_{u}(t)$ is decreasing in $(0,t^+(u))$, increasing in $(t^+(u),t^-(u))$ and decreasing in $(t^-(u),+\infty)$. Furthermore, $F_{u}(t)\rightarrow -\infty$ as $t\rightarrow +\infty$.
		\end{description}
	\end{lemma}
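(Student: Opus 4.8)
The plan is to analyze the fibering map through its first derivative, written in factored form. Since $F'_u(t) = t\,\|u\|_A^2 - t^{q-1}\intf - t^{p-1}\intg$, I would factor out $t^{q-1}>0$ and set
$$F'_u(t) = t^{q-1}\bigl(h(t) - \intf\bigr), \qquad h(t) := t^{2-q}\|u\|_A^2 - t^{p-q}\intg.$$
Because $b_\mu(x)>0$ by $(B_1)$ and $(B_2)$, we have $\intg>0$ for $u\neq 0$, so the critical points of $F_u$ on $(0,\infty)$ are exactly the solutions of $h(t)=\intf$. I would first record the elementary facts about $h$: one has $h(t)\to 0$ as $t\to 0^+$, $h(t)\to-\infty$ as $t\to\infty$, and $h'(t)=0$ has the unique root
$$t_{\max}(u)=\left(\frac{(2-q)\|u\|_A^2}{(p-q)\intg}\right)^{1/(p-2)},$$
at which $h$ attains its global maximum; thus $h$ is strictly increasing on $(0,t_{\max})$ and strictly decreasing on $(t_{\max},\infty)$, and a short substitution gives $h(t_{\max})=\frac{p-2}{p-q}\,t_{\max}^{2-q}\|u\|_A^2>0$.

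With this picture part $(i)$ is immediate. If $\intf\le 0$, then $h(t)=\intf$ has no root on the increasing branch $(0,t_{\max})$, where $h>0$, and exactly one root $t^-(u)>t_{\max}$ on the decreasing branch. Reading off the sign of $F'_u = t^{q-1}(h(t)-\intf)$ shows that $F_u$ increases on $(0,t^-)$ and decreases afterwards; differentiating gives $F''_u(t^-)=(t^-)^{q-1}h'(t^-)<0$, so $t^-(u)u\in M^-_{\lambda,\mu}$, while the dominant term $-\frac{1}{p}t^p\intg$ forces $F_u(t)\to-\infty$.

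For part $(ii)$, with $\intf>0$, the existence of the two roots $0<t^+<t_{\max}<t^-$ of $h(t)=\intf$ hinges on the strict inequality $\intf<h(t_{\max})$; this is the heart of the argument and the one genuine estimate. I would bound the left side by Hölder's inequality with the conjugate exponents $q'=\tfrac{p}{p-q}$ and $\tfrac{p}{q}$, using $a_-\le 0$, to get $\intf\le\lambda\|a_+\|_{q'}\|u\|_p^q$, and bound $\intg\le(1+\mu\|b_2\|_\infty)\|u\|_p^p$ via $b_1\le 1$ and $b_2\le\|b_2\|_\infty$. Substituting these together with the Sobolev embedding, in the form $\|u\|_A^2\ge S_p\|u\|_p^2$, into the explicit expression for $h(t_{\max})$ and simplifying the powers of $(p-q)$, $(2-q)$ and $\|u\|_p$, the desired inequality $\intf<h(t_{\max})$ reduces precisely to the hypothesis $\lambda^{p-2}(1+\mu\|b_2\|_\infty)^{2-q}<\Upsilon_0$. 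Matching the resulting exponents against the definition of $\Upsilon_0$ is where I expect the main bookkeeping to lie.

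Once $\intf<h(t_{\max})$ is secured, monotonicity of $h$ on each branch gives exactly one root $t^+\in(0,t_{\max})$, where $h'>0$, and one root $t^-\in(t_{\max},\infty)$, where $h'<0$. As in $(i)$, the identity $F''_u(t^\pm)=(t^\pm)^{q-1}h'(t^\pm)$ then yields $t^+u\in M^+_{\lambda,\mu}$ and $t^-u\in M^-_{\lambda,\mu}$, while the sign of $h(t)-\intf$ on the three intervals $(0,t^+)$, $(t^+,t^-)$ and $(t^-,\infty)$ produces the claimed decreasing–increasing–decreasing behaviour of $F_u$ together with the limit $F_u(t)\to-\infty$.
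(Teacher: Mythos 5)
Your proposal is correct, and it is essentially the intended proof: the factorization $F'_u(t)=t^{q-1}\bigl(h(t)-\intf\bigr)$ with $h(t)=t^{2-q}\|u\|_A^2-t^{p-q}\intg$, the single-maximum analysis of $h$ at $t_{\max}(u)$ with $h(t_{\max})=\frac{p-2}{p-q}t_{\max}^{2-q}\|u\|_A^2$, and the reduction of $\intf<h(t_{\max})$ — via H\"older with exponents $\bigl(\tfrac{p}{p-q},\tfrac{p}{q}\bigr)$ and $\|u\|_A^2\geq S_p\|u\|_p^2$ — to exactly the hypothesis $\lambda^{p-2}(1+\mu\|b_2\|_{\infty})^{2-q}<\Upsilon_0$ is the standard Brown--Zhang/Wu fibering-map argument, which is what the paper relies on (it gives no proof here, deferring to \cite{doc1}). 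The exponent bookkeeping in your part $(ii)$ does close: one finds $h(t_{\max})\geq \frac{p-2}{p-q}\bigl(\tfrac{2-q}{p-q}\bigr)^{\frac{2-q}{p-2}}S_p^{\frac{p(2-q)}{2(p-2)}}(1+\mu\|b_2\|_{\infty})^{-\frac{2-q}{p-2}}\|u\|_A^{q}$ against $\intf\leq\lambda\|a_+\|_{q'}S_p^{-q/2}\|u\|_A^q$, and raising the resulting parameter inequality to the power $p-2$ reproduces $\Upsilon_0$ exactly (note you correctly use the homogeneous form of $S_p$, consistent with the paper's later usage, rather than the literal display in (\ref{Sp}), which carries a stray square root).
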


	Our next result shows that these points are well defined.
	
	\begin{lemma}\label{bounded}
		The functional $\jf$ is coercive and bounded from below in $M_{\lambda,\mu}$.
	\end{lemma}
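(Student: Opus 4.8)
The plan is to use the Nehari constraint to eliminate the superlinear $p$-term, which is the sole obstruction to coercivity, and then to control the remaining concave $q$-term by a Sobolev embedding. First I would record that every $u \in M_{\lambda,\mu}$ satisfies $F_u'(1)=0$ (Remark \ref{u_in_s}), i.e.\ by \eqref{phi'},
$$\|u\|_A^2 = \intf + \intg.$$
Solving for the $p$-term gives $\intg = \|u\|_A^2 - \intf$; substituting this into \eqref{funcional.1} and collecting terms yields the reduced identity
$$\jf(u) = \left(\frac{1}{2}-\frac{1}{p}\right)\|u\|_A^2 - \left(\frac{1}{q}-\frac{1}{p}\right)\intf.$$
Since $1<q<2<p$, both coefficients are strictly positive, and, crucially, the superlinear term has disappeared.

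Next I would estimate $\intf$ from above in absolute value. Because $a_+$ and $a_-$ have disjoint supports, $|a_\lambda(x)| = \lambda a_+(x) + |a_-(x)| \le \max\{\lambda,1\}\,|a(x)|$, so Hölder's inequality with the conjugate exponents $q'=\frac{p}{p-q}$ and $\frac{p}{q}$ gives
$$\left|\intf\right| \le \max\{\lambda,1\}\,\|a\|_{q'}\,\|u\|_{L^p}^q.$$
The continuous embedding $\HA \hookrightarrow L^p(\R^N)$ (available through the diamagnetic inequality together with the usual Sobolev embedding, equivalently through the constant $S_p$ of \eqref{Sp}) then bounds $\|u\|_{L^p} \le C\|u\|_A$, so that $\left|\intf\right| \le K\|u\|_A^q$ for a constant $K=K(\lambda)>0$.

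Combining the two estimates, for $u \in M_{\lambda,\mu}$ we obtain
$$\jf(u) \ge \left(\frac{1}{2}-\frac{1}{p}\right)\|u\|_A^2 - \left(\frac{1}{q}-\frac{1}{p}\right)K\,\|u\|_A^q.$$
Writing $t=\|u\|_A \ge 0$, the right-hand side has the form $\alpha t^2 - \beta t^q$ with $\alpha,\beta>0$ and $q<2$; this function tends to $+\infty$ as $t\to\infty$ and attains a finite minimum on $[0,\infty)$. The first fact gives coercivity of $\jf$ on $M_{\lambda,\mu}$ and the second gives the uniform lower bound, which completes the proof.

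I do not expect a genuine obstacle here: the only points requiring care are recognizing that it is the $p$-term (not the $q$-term) that must be removed by the Nehari identity, and observing that the sign-changing character of $a_\lambda$ is irrelevant, since we only need an upper bound on $\intf$ via its absolute value.
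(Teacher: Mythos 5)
Your proof is correct and is essentially the argument the paper invokes by reference: the cited proof (Hsu--Lin, Lemma 2.1) likewise substitutes the Nehari identity $\|u\|_A^2=\intf+\intg$ into $\jf$ to remove the $p$-term, bounds $\intf$ by H\"older with exponents $q'=\frac{p}{p-q}$ and $\frac{p}{q}$ together with the Sobolev constant $S_p$ (valid here for complex-valued $u$ via the diamagnetic inequality, as you note), and concludes from $\jf(u)\geq \alpha\|u\|_A^2-\beta\|u\|_A^q$ with $q<2$. No gaps; your reduced identity and the final elementary analysis of $t\mapsto\alpha t^2-\beta t^q$ are exactly right.
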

	
	\begin{proof} The proof is similar to that made in \cite[Lemma 2.1]{Hsu}.
	\end{proof}
	
	For the next results we will need some estimates about the values of the functions in $m^\pm_{\lambda,\mu}$. To do this, from (\ref{lambda}) we have
	\begin{equation*} 
	||u||_A^2<   \frac{p-q}{p-2}\intf  \leq  \Upsilon_0^{1/(p-2)} \frac{p-q}{p-2} S_p^{\frac{-q}{2}} ||a_+||_{L^{q'}} ||u||_A^q.
	\end{equation*}
	Therefore
	\begin{equation}\label{u.em.nmais}
	||u||_A \leq \left( \Upsilon_0^{1/(p-2)} \frac{p-q}{p-2} S_p^{\frac{-q}{2}} ||a_+||_{L^{q'}}\right)^{1/(2-q)} ||u||_A^q,
	\end{equation}
	for all $u\in M^+_{\lambda,\mu}.$ 
	Also, if $\lambda=0$, then (\ref{lambda}) is satisfied, so that by Lemma \ref{Nehari}(i), $ M^+_{\lambda,\mu}=\emptyset $, and we have $ M_{\lambda,\mu}= M^-_{\lambda,\mu}$ for all $\mu\geq 0$.
	By has been seen, we will show the following results on the values of $m^\pm_{\lambda,\mu}$. 
	
	\begin{lemma}\label{teorema3.1}
		\begin{description}
			\item[ $(i)$  ] If $\lambda^{p-2} (1+\mu||b_2||_{\infty})^{2-q} < (\frac{q}{2})^{p-2} \Upsilon_0, $ then $m^-_{\lambda,\mu}>0$; 
			\item[$(ii)$   ] For $\lambda>0$ and $\mu \geq 0$ with $\lambda^{p-2}(1+\mu||b_2||_{\infty})^{2-q}<\Upsilon_0,$ then
			$m^+_{\lambda,\mu}<0.$ 
			In particular, if $\lambda^{p-2}(1+\mu||b_2||_{\infty})^{2-q}<(\frac{q}{2})^{p-2}\Upsilon_0,$ then
			$$m^+_{\lambda,\mu}=\inf_{M_{\lambda,\mu}}\jf(u).$$ 
		\end{description}
	\end{lemma}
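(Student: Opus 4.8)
The plan is to push everything onto the Nehari constraint. Write $\mathcal A=\|u\|_A^2$, $\mathcal B=\intf$ and $\mathcal C=\intg$, so that $u\in M_{\lambda,\mu}$ reads $\mathcal A=\mathcal B+\mathcal C$ (this is $F_u'(1)=0$) and, after eliminating $\mathcal A$, one has $F_u''(1)=(2-q)\mathcal B-(p-2)\mathcal C$. Thus $u\in M^-_{\lambda,\mu}$ (resp. $M^+_{\lambda,\mu}$) is equivalent to $(2-q)\mathcal B<(p-2)\mathcal C$ (resp. $(2-q)\mathcal B>(p-2)\mathcal C$). Eliminating $\mathcal C$ and then $\mathcal B$ in $\jf(u)=\frac12\mathcal A-\frac1q\mathcal B-\frac1p\mathcal C$ gives the two working identities $\jf(u)=\frac{p-2}{2p}\mathcal A-\frac{p-q}{pq}\mathcal B$ and $\jf(u)=-\frac{2-q}{2q}\mathcal B+\frac{p-2}{2p}\mathcal C$. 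I will also use $\mathcal B\le\lambda\|a_+\|_{q'}S_p^{-q/2}\mathcal A^{q/2}$ and $\mathcal C\le\|b_\mu\|_\infty S_p^{-p/2}\mathcal A^{p/2}$ (H\"older with $q'=p/(p-q)$ together with $S_p$), and the bound $\|b_\mu\|_\infty\le 1+\mu\|b_2\|_\infty$ coming from $(B_1)$--$(B_2)$.

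Part (ii) is short. Since $a_+\not\equiv0$, there is $v$ supported in $\{a>0\}$ with $\int_{\R^N}a_\lambda|v|^q\,dx>0$, and Lemma~\ref{Nehari}(ii) (valid under (\ref{lambda})) then gives $t^+(v)v\in M^+_{\lambda,\mu}$, so $M^+_{\lambda,\mu}\ne\emptyset$. For any $u\in M^+_{\lambda,\mu}$, the inequality $(2-q)\mathcal B>(p-2)\mathcal C$ forces $\mathcal B>0$ and $\mathcal C<\frac{2-q}{p-2}\mathcal B$; inserting this into $\jf(u)=-\frac{2-q}{2q}\mathcal B+\frac{p-2}{2p}\mathcal C$ yields $\jf(u)<\frac{2-q}{2}\big(\frac1p-\frac1q\big)\mathcal B<0$ because $p>q$. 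Hence $m^+_{\lambda,\mu}\le\jf(t^+(v)v)<0$. For the last claim, the stronger hypothesis implies (\ref{lambda}), so $M^0_{\lambda,\mu}=\emptyset$ by Lemma~\ref{N0} and $M_{\lambda,\mu}=M^+_{\lambda,\mu}\cup M^-_{\lambda,\mu}$; combining $m^-_{\lambda,\mu}>0$ from (i) with $m^+_{\lambda,\mu}<0$ gives $\inf_{M_{\lambda,\mu}}\jf=\min\{m^+_{\lambda,\mu},m^-_{\lambda,\mu}\}=m^+_{\lambda,\mu}$.

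The substance is part (i). For $u\in M^-_{\lambda,\mu}$, the inequality $(2-q)\mathcal B<(p-2)\mathcal C$ and $\mathcal A=\mathcal B+\mathcal C$ give $\mathcal C>\frac{2-q}{p-q}\mathcal A$; feeding in $\mathcal C\le\|b_\mu\|_\infty S_p^{-p/2}\mathcal A^{p/2}$ produces a uniform lower bound $\|u\|_A\ge\delta_0$ with $\delta_0^{\,p-2}=\frac{2-q}{p-q}\|b_\mu\|_\infty^{-1}S_p^{p/2}$. Then $\jf(u)=\frac{p-2}{2p}\mathcal A-\frac{p-q}{pq}\mathcal B\ge g(\|u\|_A)$, where $g(s)=\frac{p-2}{2p}s^2-\frac{p-q}{pq}\lambda\|a_+\|_{q'}S_p^{-q/2}s^q$. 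Since $1<q<2$, $g$ is negative near $0$ and has a unique positive root $s_*$ with $s_*^{\,2-q}=\frac{2(p-q)}{q(p-2)}\lambda\|a_+\|_{q'}S_p^{-q/2}$, and $g$ is strictly increasing on $[s_0,\infty)$ where $s_0<s_*$ is its minimizer; so it suffices to verify $\delta_0>s_*$.

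The only genuine computation is the comparison $\delta_0>s_*$. Raising it to the power $(p-2)(2-q)$ and substituting the closed forms of $\delta_0$ and $s_*$, the powers of $S_p$ recombine — the exponents $-\frac{q(p-2)}{2}$ and $-\frac{p(2-q)}{2}$ add up to $-(p-q)$ — and, after replacing $\|b_\mu\|_\infty$ by the bound $1+\mu\|b_2\|_\infty$, the requirement $\delta_0>s_*$ collapses to exactly $\lambda^{p-2}(1+\mu\|b_2\|_\infty)^{2-q}<\big(\frac q2\big)^{p-2}\Upsilon_0$ with $\Upsilon_0$ as defined in the statement. This is precisely the hypothesis of (i); under it $\delta_0>s_*>s_0$, so $g$ is increasing on $[\delta_0,\infty)$ and $\jf(u)\ge g(\delta_0)>0$ for every $u\in M^-_{\lambda,\mu}$, whence $m^-_{\lambda,\mu}\ge g(\delta_0)>0$. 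I expect the main obstacle to be this bookkeeping step: organizing the two-sided matching so that the norm lower bound $\delta_0$ and the root $s_*$ of $g$ combine into the prescribed constant $\big(\frac q2\big)^{p-2}\Upsilon_0$, rather than anything conceptual.
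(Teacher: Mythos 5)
Your proposal is correct and takes essentially the same route as the paper, whose own ``proof'' is only a citation of Wu's Theorem 3.1: the standard fibering-map identities on the Nehari manifold, the H\"older--Sobolev bounds via $S_p$, the norm lower bound $\|u\|_A\geq\delta_0$ on $M^-_{\lambda,\mu}$, and the comparison with the positive root $s_*$ of $g$ are exactly Wu's argument, and your exponent bookkeeping does recombine to $\left(\frac{q}{2}\right)^{p-2}\Upsilon_0$ as claimed. Nothing further is needed.
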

	
	\begin{proof} The proof is similar to what was done in \cite[Theorem 3.1]{Wu}.
	\end{proof}
	
	By Lemma \ref{teorema3.1}, we can conclude that for every $u \in \HAo$
	\begin{equation}
	\jf (t^-(u)u) = \max_{t\leq 0} \jf(tu),
	\end{equation}
	whenever $ \lambda^{p-2} (1+\mu||b_2||_{\infty})^{2-q}  < \left( \frac{q}{2} \right)^{p-2}\Upsilon_0,$ with $\lambda\geq 0 $ and $\mu>0.$

	\section{Existence of $m_{\infty}$ }\label{existencia}

	In this section we will define the energy level of the limit problem and make some energy estimates in relation to the energy levels of the solutions in the Nehary manifold. Therefore, we will have tools to show that the fourth solution to be found has a different level than other solutions already found. For this, consider the following semilinear elliptical problem
	$$
	\left\{ \begin{array} [c]{ll}
	- \Delta_A  u + u =  |u|^{p-2}u   \, \, \mbox{in} \, \,            \R^N, & \\
	u \in \HA.&\\
	\end{array}
	\right.\leqno {(P_A)}
	$$
	
	Define $J_{\infty}(u)=\frac{1}{2}||u||^2_A-\frac{1}{p}||u||_p^p$, the functional associated with the problem ($P_A$), then $J_{\infty}$ is a functional $C^2$ in $\HA$. The Nehari manifold associated with problem $(P_A)$ is given by 
	$$M_{\infty}=\{ u \in \HAo; \; J'_{\infty}(u)u=0 \}.$$ 
	In this problem we can observe that if $ u \in N_{\infty}$, then $||u||_A^2=||u||_p^p$.
	Now consider the following minimization problem
	\begin{equation}\label{m.inf}
	m_{\infty}= \inf_{M_{\infty}}J_{\infty}(u).
	\end{equation}
	In \cite{doc1} they prove that exists $\bar{u} \in \HA$ such that $m_{\infty}= \inf_{N_{\infty}}J_{\infty}(u)=J_{\infty}(\bar{u})$. 
	From these considerations we will show the following result that gives us a description of a sequence (PS) of $J_{\lambda,\mu}$.
	%se precisar de solucao radial, simetrica, positiva, etc citar esteban e lions.
	
	\begin{lemma}\label{Lemmadecompacidade}
		
		Let $\{u_n\}\subset M^-_{\lambda, \mu}$ be a sequence $(PS)_{\beta}$ in $\HA$ of $J_{\lambda,\mu} $, this is, a sequence satisfying $J_{\lambda,\mu}(u_n)=\beta +o_n(1)$  and $J'_{\lambda,\mu}(u_n)= o_n(1)$ in $H^{-1}_A$ as $n\rightarrow \infty$, where
		$$m^+_{\lambda,\mu}+ m_{\infty}<\beta <m^-_{\lambda,\mu}+ m_{\infty},$$
		then there is a subsequence $\{u_n\}$ and $u_0 \in \HA$, with a non zero $u_0$, such that $u_n=u_0+o_n(1)$ strong in $\HA$ and $J_{\lambda,\mu}(u_0)=\beta $. Moreover, $u_0$ is a solution of $\Plm.$
	\end{lemma}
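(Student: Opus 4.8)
The plan is to run a concentration--compactness (global compactness) argument in the spirit of Brezis--Lieb and Benci--Cerami, adapted to the magnetic space $\HA$, and to close it using the energy-gap hypothesis on $\beta$ together with $(C_1)$, $(C_2)$ and the smallness of $\|a_-\|_{q'}$. First I would show the sequence is bounded. Since $\{u_n\}\subset M^-_{\lambda,\mu}$ and $\jf$ is coercive and bounded below on $M_{\lambda,\mu}$ (Lemma \ref{bounded}), the bound is immediate; alternatively, combining $\jf(u_n)=\beta+o_n(1)$ with $\langle J'_{\lambda,\mu}(u_n),u_n\rangle=o_n(1)\|u_n\|_A$ and computing $\jf(u_n)-\tfrac1p\langle J'_{\lambda,\mu}(u_n),u_n\rangle=(\tfrac12-\tfrac1p)\|u_n\|_A^2-(\tfrac1q-\tfrac1p)\int_{\R^N}a_\lambda|u_n|^q\,dx$ controls $\|u_n\|_A^2$, because the concave term is of lower order ($q<2$) via $|\int_{\R^N}a_\lambda|u_n|^q\,dx|\le\|a_\lambda\|_{q'}\|u_n\|_p^q\le C\|u_n\|_A^q$. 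Passing to a subsequence, $u_n\rightharpoonup u_0$ in $\HA$, with $u_n\to u_0$ in $L^r_{loc}$ for $2\le r<2^*$ and a.e. To see that $u_0$ solves $\Plm$ I would pass to the limit in $\langle J'_{\lambda,\mu}(u_n),\varphi\rangle=o_n(1)$ for $\varphi\in C_0^\infty$: the convex term passes by local compactness, and the concave term passes because $a_\lambda\in L^{q'}$ has the decay in $(A)$, so its contribution is compact. Hence $J'_{\lambda,\mu}(u_0)=0$.

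Next I would set $v_n=u_n-u_0$ and prove the splitting. The quadratic part splits exactly in the Hilbert space, $\|u_n\|_A^2=\|u_0\|_A^2+\|v_n\|_A^2+o_n(1)$, since $u_n\rightharpoonup u_0$. For the convex part the Brezis--Lieb lemma gives $\int_{\R^N}b_\mu|u_n|^p\,dx=\int_{\R^N}b_\mu|u_0|^p\,dx+\int_{\R^N}b_\mu|v_n|^p\,dx+o_n(1)$, and because $b_1(x)\to1$ and $b_2(x)\to0$ as $|x|\to\infty$ (so $b_\mu\to1$) while $v_n\rightharpoonup0$, one may replace $\int_{\R^N}b_\mu|v_n|^p\,dx$ by $\int_{\R^N}|v_n|^p\,dx+o_n(1)$. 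The concave part satisfies $\int_{\R^N}a_\lambda|v_n|^q\,dx\to0$ by the integrability and decay of $a_\lambda$. Therefore $\jf(u_n)=\jf(u_0)+J_\infty(v_n)+o_n(1)$ and, splitting the derivative the same way, $J'_\infty(v_n)=o_n(1)$, so $\{v_n\}$ is a $(PS)$ sequence of $J_\infty$ with $v_n\rightharpoonup0$. From $\langle J'_\infty(v_n),v_n\rangle=o_n(1)$ I obtain $\|v_n\|_A^2=\|v_n\|_p^p+o_n(1)$ and, using the definition \eqref{Sp} of $S_p$, either $v_n\to0$ or $\liminf\|v_n\|_A^2\ge S_p^{p/(p-2)}$; in the latter case $\liminf J_\infty(v_n)\ge(\tfrac12-\tfrac1p)S_p^{p/(p-2)}=m_\infty$. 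In short, $\lim J_\infty(v_n)\in\{0\}\cup[m_\infty,\infty)$ and $\beta=\jf(u_0)+\lim J_\infty(v_n)$. It is the hypothesis $A(x)\to d$ as $|x|\to\infty$ that guarantees $J_\infty$ is the correct limit functional, since after a gauge change the field is asymptotically constant and the problem at infinity is exactly $(P_A)$.

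It then remains to show $\lim J_\infty(v_n)=0$. Once this holds, $u_n\to u_0$ strongly, and since $u_n\in M^-_{\lambda,\mu}$ gives $\beta=\lim\jf(u_n)\ge m^-_{\lambda,\mu}>0=\jf(0)$ (Lemma \ref{teorema3.1}), we get $\jf(u_0)=\beta>0$, so $u_0\ne0$; and as $M^0_{\lambda,\mu}=\emptyset$ (Lemma \ref{N0}) the limit sits in $M^-_{\lambda,\mu}$, completing the proof with $\jf(u_0)=\beta$. The only alternative is an escaping bubble, $\lim J_\infty(v_n)\ge m_\infty$, which would force $\jf(u_0)=\beta-\lim J_\infty(v_n)\le\beta-m_\infty<m^-_{\lambda,\mu}$. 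Excluding this is the heart of the matter. The lower bound $m^+_{\lambda,\mu}+m_\infty<\beta$ guards against the degenerate configuration in which the energy collapses onto the low-energy part of the manifold, while the upper bound $\beta<m^-_{\lambda,\mu}+m_\infty$ must be played against a strict energy comparison: using $(C_1)$ (so that $b_1<1$ on a set of positive measure), $(C_2)$ ($r_{b_1}>2$), and $\|a_-\|_{q'}$ small, one shows that any ``body plus bubble at infinity'' configuration costs at least $m^-_{\lambda,\mu}+m_\infty$, so that the strict inequality $\beta<m^-_{\lambda,\mu}+m_\infty$ is incompatible with $\lim J_\infty(v_n)\ge m_\infty$.

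I expect this last step --- quantitatively ruling out the escaping bubble --- to be the main obstacle. The boundedness and the Brezis--Lieb splitting are routine, and in the magnetic setting they require only weak convergence in the Hilbert space together with the hypothesis $A\to d$, which lets one treat the field at infinity as gauge-trivial so that the limit functional is precisely $J_\infty$. The delicate point is the strict energy estimate at infinity, controlled by the exponential-decay rates: it is exactly here that the conditions $(C_1)$--$(C_2)$ and the smallness of $\|a_-\|_{q'}$ are consumed, pinning $\beta$ strictly below the split energy $m^-_{\lambda,\mu}+m_\infty$ and thereby obstructing concentration at infinity.
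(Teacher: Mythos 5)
Your opening steps match the paper's proof: boundedness of $\{u_n\}$, extraction of the weak limit $u_0$ with $J'_{\lambda,\mu}(u_0)=0$, compactness of the concave term from the integrability of $a_\lambda$, and the Brezis--Lieb splitting $\jf(u_n)=\jf(u_0)+J_\infty(v_n)+o_n(1)$ with the weights absorbed via $(B_1)$, $(B_2)$. Your dichotomy is a genuine (and cleaner) variant: from $\|v_n\|_A^2=\|v_n\|_p^p+o_n(1)$ and the constant $S_p$ you conclude directly that either $v_n\to 0$ or $\liminf J_\infty(v_n)\ge \frac{p-2}{2p}S_p^{p/(p-2)}=m_\infty$, whereas the paper runs a Lions-type nonvanishing argument, setting $\delta=\limsup_n\sup_y\int_{B(y,1)}|v_n|^p$, translating $v_n$ to extract a nontrivial bubble, projecting onto $M_\infty$ with factors $t_n\to 1$, and obtaining $\jf(u_n)\ge m_\infty+\jf(u_0)+o_n(1)$. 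Up to this point your proposal is sound; also, the remark about gauge-changing via $A\to d$ is extraneous here, since the limit functional is the magnetic $J_\infty$ of problem $(P_A)$ itself, and $A\to d$ is only used later to identify $m_\infty$ with the energy of the explicit solution $w_0$.

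The genuine gap is in the step you yourself flag as the heart: excluding the escaping bubble. You defer it to $(C_1)$, $(C_2)$ and smallness of $\|a_-\|_{q'}$, but none of these are hypotheses of this lemma; in the paper they enter only in Theorem \ref{teo7.3} (via the level estimates $m^{\infty}<n_{0,b_0}<2m^{\infty}$ and Lemmas \ref{5.1.a}, \ref{Lemma7.1}), where they serve to place the minimax level $n_{a_\lambda,b_\mu}$ strictly inside the window $(m^+_{\lambda,\mu}+m_\infty,\,m^-_{\lambda,\mu}+m_\infty)$ so that this lemma can be applied as a black box. Inside the lemma the exclusion is soft, not quantitative: since $J'_{\lambda,\mu}(u_0)=0$, either $u_0=0$ or $u_0\in M_{\lambda,\mu}$, so in either case $\jf(u_0)\ge \inf_{M_{\lambda,\mu}}\jf=m^+_{\lambda,\mu}$ by Lemma \ref{teorema3.1}(ii), and the paper plays this comparison with $m^+_{\lambda,\mu}$ against the bound on $\jf(u_0)$ extracted from the splitting and the energy hypothesis to reach a contradiction (the printed inequalities at this point contain sign slips --- the proof writes $\beta<m_\infty+m^+_{\lambda,\mu}$ and ``$m^+>0$, $m^-<0$'' --- but the mechanism is this comparison, not a decay estimate). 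Moreover, the quantitative claim you propose to prove, that any ``body plus bubble at infinity'' configuration costs at least $m^-_{\lambda,\mu}+m_\infty$, is false as stated: a critical point in $M^+_{\lambda,\mu}$ has negative energy (it is a local minimum of its fibering map, with $F_u(t^+)<F_u(0)=0$ by Lemma \ref{Nehari}(ii)), so such a body plus one bubble costs at most $m_\infty$, which is strictly below $m^-_{\lambda,\mu}+m_\infty$ because $m^-_{\lambda,\mu}>0$; note also that by Proposition \ref{prop4.1} one even has $m^-_{\lambda,\mu}<m^+_{\lambda,\mu}+m_\infty$. So the exponential-decay machinery you invoke belongs to a different part of the overall argument, and your plan for the decisive step of this lemma would not close as described.
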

	
	\begin{proof}
		For $(A), (B_1)$ and $ (B_2)$, we obtain by a standard argument that $\{u_n\}$ is bounded sequence in $\HA$. Then there is a subsequence $\{u_n\}$ and $u_0 \in \HA$ such that $u_n  \rightharpoonup u_0$ weakly in $\HA$ as $n\rightarrow \infty$. Taking $v_n = u_n-  u_0$, we have $v_n \rightharpoonup 0$ weak in $\HA$ as $n\rightarrow \infty$. 
		Denoting by $B(0,1)$ the ball centered on the origin of radius 1, we have in $ B (0,1) $ the strong convergence
		$$ \int_{B(0,1)} |u_n|^q \rightarrow \int_{B(0,1)} |u_0|^q.$$
		By the Dominated Convergence Theorem we obtain
		$$\int_{B(0,1)}a_{\lambda} ||u_n|^q-|u_0|^q| \rightarrow 0,\;\; \mbox{when}\;\;n\rightarrow \infty.$$
		Then, by H\"{o}lder and the integrability of $a_{\lambda}$ follows
		{\small
			\begin{eqnarray*}
				\left|\int a_{\lambda} (x)(|u_n|^q-|u_0|^q)\right|&\leq& o_n(1) + \int_{ B^c(0,1)}a_{\lambda}(x)||u_n|^q-|u_0|^q|\\
				&\leq &  o_n(1) +\left( \int_{ B^c(0,1)}a_{\lambda}(x) ^{q^*}\right)^{\frac{1}{q^*}}( ||u_n||^q_p+||u_0||_p^q)\\
				&\leq& o_n(1)+ \epsilon C .
		\end{eqnarray*}}	
		As $\epsilon >0$ it is arbitrary, we have
		$$ \int a_{\lambda} (x)(|u_n|^q-|u_0|^q)=o_n(1).$$
		
		On the other hand, $(B_1)$ and $(B_2)$ and by Brezis-Lieb lemma (see \cite{willem}), we can conclude that $\mu\int b_2(x) |v_n|^p=o_n(1)$, $\int (1-b_1(x))|v_n|^p=o_n(1)$ and $\int b_{\mu}(x) (|u_n|^p - |v_n|^p-|u_0|^p)=o_n(1)$, which together with the above inequality gives us
		$$  J_{\lambda , \mu}(u_n)= J_{\infty}(v_n)+ J_{\lambda , \mu}(u_0) +o_n(1). $$
		In a similar way we obtain that $ J'_{\infty}(v_n)v_n= J'_{\lambda , \mu}(u_n)u_n- J'_{\lambda , \mu}(u_0)u_0 +o_n(1) $. By hypothesis $J'_{\lambda , \mu}(u_n) \rightarrow 0 $ strong in $\HA^{-1}$ and $u_n \rightharpoonup u_0$ weak in $\HA$ as $n \rightarrow \infty$ and so we have $J'_{\lambda , \mu}(u_0)=0$.
		Now, define $\delta= \limsup_{n\rightarrow \infty} \sup_{y\in \R^N} \int_{B(y,1)}|v_n|^p.$ So we have two cases:
		\begin{description}
			\item[ $(i)$ ] $\delta >0$, or  
			\item[$(ii)$   ]$  \delta = 0$.   
		\end{description}
		
		Suppose that $(i)$ happen. Then there will be a sequence $\{y_n\}\subset \R^N$ such that $ \int_{B(y_n,1)} |v_n|^p\geq \frac{\delta}{2}$ and for all $n \in \N$.
		Define $\tilde{v}_n (x)=v_n(x+y_n)$. We have that $\{\tilde{v}_n  \}$ is bounded and $\tilde{v}_n  \rightharpoonup v$ weak and almost everywhere.
		Making a change of variables we obtain
		$$ \int_{B(0,1)}|\tilde{v}_n |^p \geq \frac{\delta }{4}.$$
		Then
		\begin{equation}\label{v.maior.q.zero} 
		\int_{B(0,1)}|v |^p \geq \frac{\delta }{4},
		\end{equation}
		giving us $v\neq 0$.
		But, $ v_n \rightharpoonup 0$ weakly, then  
		\begin{equation} \label{vn}
		\int_{\R^N}| v_n|^p \geq\int_{B(y_n,1)}|v_n|^p \geq \frac{\delta }{2}>0.
		\end{equation} 
		See that
		$$J_{\infty}(v_n)=\frac{1}{2}\int(|\nabla_Av_n|^2+v_n^2)dx-\frac{1}{p}\int|v_n|^p dx.$$
		Likewise,
		$$F_{v_n}(t)=J_{\infty}(tv_n)=\frac{t^2}{2}||v_n||_A^2-\frac{t^p}{p}||v_n||^p.$$
		For each $n \in \N$, we can get $t_n$ such that $t_nv_n \in M_{\infty}$. So we build a sequence $\{t_n\}\subset \R^N$ with $t_n\rightarrow t_0$ as $n\rightarrow \infty$, such that $t_nv_n \in M_{\infty}$, that is, such that $J'_{\infty}(t_nv_n)t_nv_n=0$. 
		See also that
		$$J'_{\infty}(v_n)v_n=||v_n||_A^2-||v_n||^p=o_n(1)$$
		and	
		\begin{equation}\label{a.i} F'_{v_n}(t)=J'_{\infty}(tv_n)v_n=t||v_n||_A^2-t^{p-1}||v_n||^p=o_n(1).
		\end{equation} 
		With this
		\begin{equation}\label{vai.p.zero} 
		(t_n-t_n^{p-1})||v_n||_A^2=t_n(1-t_n^{p-2})||v_n||_A^2=o_n(1). 
		\end{equation}
		For (\ref{v.maior.q.zero}) we know that $||v_n||^2_A\nrightarrow 0$ (that is, $v_n$ does not converge to zero). Also note that
		$t_n^{2-p}=\frac{\int|v_n|^{p}}{||v_n||_A^2}\geq \frac{\delta}{2c}  .$
		With that and by (\ref{vai.p.zero}) we get that 
		$(1-t_n^{p-2})\rightarrow 0 $, giving us that $t_n\rightarrow 1.$
		%	Como $J'_{\infty}(v_n)v_n\rightarrow 0$ and $J'_{\infty}(t_nv_n)v_n\rightarrow 0$, temos que $t_n \rightarrow 1$ as $n\rightarrow \infty$, giving us $t_0=1$.
		Now, see that $v_n \rightharpoonup 0$ weak in $\HA$ as $n\rightarrow \infty$. With this and by the fact $t_n \rightarrow 1$, we can conclude that
		$$ J_{\lambda, \mu}(u_n)=J_{\infty}(t_nv_n) +J_{\lambda, \mu}(u_0)+ o_n(1)\geq m_{\infty}+J_{\lambda, \mu}(u_0) .   $$
		Note that by hypothesis $ J_{\lambda, \mu}(u_n)=\beta+ o_n(1)$ with $\beta <m_{\infty} +m_{\lambda, \mu}^+$. From there we obtain
		$$\beta+ o_n(1)= J_{\lambda, \mu}(u_n)=J_{\infty}(t_n v_n) +J_{\lambda, \mu}(u_0)+ o_n(1)\geq m_{\infty}+J_{\lambda, \mu}(u_0)  ,$$
		giving us
		$$m_{\infty}+J_{\lambda, \mu}(u_0)  \leq  \beta + o_n(1) <m_{\infty} +m_{\lambda, \mu}^+ + o_n(1),$$
		therefore
		\begin{equation}\label{estm^+.}
		J_{\lambda, \mu}(u_0)  < m_{\lambda, \mu}^+ + o_n(1).
		\end{equation}
		We have already seen that $J'_{\lambda, \mu}(u_n)$ converges strongly to zero, therefore we get $J'_{\lambda, \mu}(u_0)=0$. Thus $u_0 \in M_{\lambda, \mu}.$ Still, by Lemma \ref{N0}, $M^0_{\lambda, \mu} =  \emptyset$ and by Lemma \ref{teorema3.1}, we conclude that $m^+>0 $ and $m^-<0.$ Then,
		$$J_{\lambda, \mu}(u_0)\geq \inf_{M_{\lambda, \mu}}J_{\lambda, \mu}(u) = \inf_{M^+_{\lambda, \mu}}J_{\lambda, \mu}(u)=m^+,$$
		\noindent which contradicts what we have concluded in (\ref{estm^+.}).
		We conclude that ($ii$) occurs. In this case, $\{v_n\}$ such that $\int|v_n|^p\rightarrow 0$ if $n\rightarrow \infty$.
		
		As we already have $J'_{\infty}(v_n)v_n=o_n(1)$ with $J'_{\infty}(v_n)v_n=||v_n||_A^2-||v_n||_p^p$ and $\int|v_n|^p\rightarrow 0$, we conclude that $||v_n||^2\rightarrow 0$ giving us $u_n \rightarrow u_0$ strong in $\HA$.
		See also that $u_0 \neq 0$. In fact, note that if $u_0=0$ so $\tilde{v}_n =v_n=u_n$ and $ \int_{B(0,1)}|u_n|^p \geq \frac{\delta }{4}$, which we have already seen to be an absurd.

	\end{proof}

	%\section{The second solution}
	
	To treat the existence of the second solution of the problem $ \Plm $, we need to make some considerations. Note that equation
	$$- \Delta_A u + u = a_{\lambda}(x) |u|^{q-2}u+b_{\mu}(x) |u|^{p-2}u  \;\;\;\;\;\; \Plm $$
	is such that $a_{\lambda}(x) \rightarrow 0 $ and $ b_{\mu}(x) \rightarrow 1$ as $|x| \rightarrow \infty$. Adding the hypothesis of $ A \rightarrow d$ with $d$ constant as $|x| \rightarrow \infty$, the problem  $\Plm$ converges at infinity for the problem
	$$- \Delta_d u + u = |u|^{p-2}u.  \;\;\;\;\;\;(P_{\infty}), $$
	where $-\Delta_d=(-i\nabla+d)^2$.
	Thus, by a result of Ding and Liu \cite[Lemma 2.5]{DL}, $u$ is a solution of Problem $(P_{\infty})$ if and only if $v(x):=|u(x)| \in H^1$ it is a solution to the problem
	$$- \Delta v + v = v^{p-1}; \;\;v>0 .  \;\;\;\;\;\; (E_{\infty}) $$
	Moreover, the equations $(P_{\infty})$ and $ (E_{\infty}) $ have the same energy level, that is
	$$ J_{\infty} (u )  = I_{\infty} (v ) = m_{\infty};$$
	on what $ J_{\infty}$ and $I_{\infty}$ are the respective functional associated with the previous problems.
	Acording to Berestick, Lions \cite{BerLions} or Kwong \cite{KWONG}, the equation $(E_{\infty}) $ has a unique solution $z_0$ symmetrical, positive and radial. By \cite[Theorem 2]{GNN}, for all  $\epsilon >0,$ exists $A_{\epsilon}, B_0$ and $C_{\epsilon}$ positive such that
	\begin{equation}\label{exp1}
	A_{\epsilon} \exp (-(1+ \epsilon)|x|)\leq z_0(x) \leq B_0 \exp (-|x|)
	\end{equation}
	and
	\begin{equation}\label{exp2}
	|\nabla z_0(x)| \leq C_{\epsilon} \exp (-(1- \epsilon)|x|).
	\end{equation}
	According Kurata \cite[Lemma 4]{Kurata}, defining $w_0=z_0 e^{-idx}$ we have $w_0$ is a solution of $ (P_{\infty}) $, unique, symmetrical, positive and radial. So we will have $ J_{\infty} (w_0 )  =m_{\infty}$. See also that $z_0=|w_0|$, which together with (\ref{exp1}) gives us the following inequalities
	\begin{equation}\label{exp1w}
	A_{\epsilon} \exp (-(1+ \epsilon)|x|)\leq |w_0(x)| \leq B_0 \exp (-|x|)
	\end{equation}
	and
	\begin{equation}\label{exp2w}
	|\nabla w_0(x)| \leq C_{\epsilon} \exp (-(1- \epsilon)|x|).
	\end{equation}
	
	Next, we will make some estimates about the minimum energy levels in the Nehari Manifold to prove the existence of a second solution.
	In order to not overload the notation, we will denote $ u_{\lambda,\mu}^+:= u^+$. Considering $ J(u^+) =m^+$, $ m^- = \inf_{u  \in M^-_{\lambda,\mu }}J_{\lambda,\mu }(u)$ and $ m_{\infty} = \inf_{u  \in M_{\infty}}J_{\infty}(u)= J_{\infty} (w_0 )$, we will make the following estimate for such energy levels.

	\begin{prop}\label{prop4.1}
		For all  $\lambda>0$ and $\mu> 0$ satisfying $ \lambda^{p-2} (1+\mu||b_2||_{\infty})^{2-q}< \Upsilon_0$, we have $m^- < m^+ + m^{\infty}$.
	\end{prop}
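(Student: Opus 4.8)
The plan is to produce an explicit competitor in $M^-_{\lambda,\mu}$, obtained by gluing the minimizer on $M^+_{\lambda,\mu}$ to a far-away translate of the ground state of the limit problem, and to show that the gluing strictly lowers the energy below the non-interacting sum $m^+ + m_\infty$. I would first fix two building blocks whose existence is supplied by the results quoted from \cite{doc1}: the minimizer $u^+\in M^+_{\lambda,\mu}$, which satisfies $\jf(u^+)=m^+<0$ by Lemma \ref{teorema3.1} and, since $M^0_{\lambda,\mu}=\emptyset$ by Lemma \ref{N0}, is a genuine critical point of $\jf$ (hence a weak solution of $\Plm$); and the positive radial ground state $w_0$ of $(P_\infty)$ with $J_\infty(w_0)=m_\infty$ and the exponential decay (\ref{exp1w})--(\ref{exp2w}). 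For $y\in\R^N$ with $|y|=\rho$ large I set $w_\rho:=w_0(\cdot-y)$ and study the one-parameter family $t\mapsto u^+ + t\,w_\rho$, $t\ge 0$, keeping $u^+$ \emph{fixed} and scaling only the bump.

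The first step is to check that this family meets $M^-_{\lambda,\mu}$. For each $t$, Lemma \ref{Nehari} provides a unique scaling $s^-(t)>0$ with $s^-(t)\,(u^+ + t\,w_\rho)\in M^-_{\lambda,\mu}$, and $t\mapsto s^-(t)$ is continuous. At $t=0$ one has $s^-(0)=t^-(u^+)>1$, since $u^+\in M^+_{\lambda,\mu}$ sits at the local minimum $t^+(u^+)=1<t^-(u^+)$ of its fibering map; while as $t\to\infty$ the family is dominated by $t\,w_\rho$, whose $M^-$-scaling is asymptotic to $1/t\to 0<1$. By the intermediate value theorem there is $t_\rho>0$ with $s^-(t_\rho)=1$, that is $u^+ + t_\rho w_\rho\in M^-_{\lambda,\mu}$. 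Consequently
\[
m^- \;\le\; \jf(u^+ + t_\rho w_\rho)\;\le\;\sup_{t\ge 0}\jf(u^+ + t\,w_\rho),
\]
and it remains to estimate this supremum.

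The decisive step is the energy expansion of $\jf(u^+ + t\,w_\rho)$. Testing the equation solved by $u^+$ against $w_\rho$ shows that the linear magnetic cross term $t\,\mathrm{Re}\,\langle u^+,w_\rho\rangle_A$ is exactly matched by the first-order terms of the expansions of $\int a_\lambda|u^+ + t w_\rho|^q$ and $\int b_\mu|u^+ + t w_\rho|^p$, so these cancel. What survives from the $p$-nonlinearity is, for $p>2$, a \emph{strictly negative} interaction: in the region where $w_\rho$ dominates, the remainder of $|u^+ + t w_\rho|^p$ after subtracting $|u^+|^p$, $|t w_\rho|^p$ and the cancelled first-order term is bounded below by a multiple of $|u^+|\,(t|w_\rho|)^{p-1}$, so the interaction contributes at most $-c\,t^{p-1}\int b_\mu|u^+|\,|w_\rho|^{p-1}$, a quantity of order $e^{-\rho}$ by (\ref{exp1w}). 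Since $\sup_{t\ge0}J_\infty(t\,w_\rho)=m_\infty+o(1)$ and $\jf(u^+)=m^+$, this yields an estimate of the form
\[
\sup_{t\ge 0}\jf(u^+ + t\,w_\rho)\;\le\; m^+ + m_\infty - c\,e^{-\rho} + (\text{error}),
\]
where the error collects the translation and magnetic defects of the bump and the lower-order $a_\lambda$-interaction.

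The main obstacle is to show that the gain $-c\,e^{-\rho}$ strictly dominates the error for all large $\rho$, so that the supremum lies strictly below $m^+ + m_\infty$. This is a question of matching decay rates: one must bound the defect $\int(1-b_\mu)|w_\rho|^p$ coming from $b_\mu\neq 1$, the magnetic defect $\int|A-d|^2|w_\rho|^2$ coming from $A\neq d$ on the support of $w_\rho$ (controlled using $A\to d$ as $|x|\to\infty$ and the concentration of $w_\rho$ near $y$), and the $a_\lambda$-interaction, all against the exponentially small but strictly negative principal term. Here the quantitative hypotheses $(B_1)$ and $(B_2)$ on $b_\mu$ together with the integrability $a\in L^{q'}$ and $a_\lambda\to 0$, and the pointwise bounds (\ref{exp1w})--(\ref{exp2w}), are exactly what make the book-keeping close. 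Once the net correction is shown to be strictly negative, fixing one admissible $\rho$ gives $m^- < m^+ + m_\infty$.
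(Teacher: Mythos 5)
Your skeleton --- glue the $M^+$-minimizer $u^+$ to a far translate $w_\rho$ of the limit ground state, hit $M^-_{\lambda,\mu}$ along the ray $u^+ + t\,w_\rho$ by an intermediate-value argument on the fibering scaling, and bound $\sup_{t\ge 0}J_{\lambda,\mu}(u^+ + t\,w_\rho)$ strictly below $m^+ + m_\infty$ --- is the standard route, and the IVT step and the cancellation of first-order cross terms against $\langle J'_{\lambda,\mu}(u^+),\cdot\rangle$ are sound (modulo the usual care with the non-$C^2$ power $q<2$). The genuine gap is in the decisive step: your source of strict negativity. First, the pointwise bound you invoke, namely that $|u^+ + t w_\rho|^p - |u^+|^p - |t w_\rho|^p$ minus the first-order term is bounded below by $c\,|u^+|\,(t|w_\rho|)^{p-1}$, is a fact about \emph{nonnegative real} functions. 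Here $u^+$ is a solution of the magnetic problem, hence genuinely complex-valued, and $w_\rho = z_0 e^{-idx}$ carries a phase; convexity of $z\mapsto |z|^p$ only gives $|z_1+z_2|^p \ge |z_1|^p + p|z_1|^{p-2}\mathrm{Re}\,(z_1\bar z_2)$, with no sign-definite interaction of size $|z_1|\,|z_2|^{p-1}$ --- the relative phase can kill or even reverse it. Second, even granting a gain of order $e^{-\rho}$, your bookkeeping does not close: the $a_-$ error is of order $e^{-\min\{r_{a_-},\,q\}\rho}$ and hypothesis $(A)$ does \emph{not} require $r_{a_-}\ge 1$; likewise $(B_1)$ imposes no lower bound on $r_{b_1}$ at this stage ($r_{b_1}>2$ is $(C_2)$, which Proposition \ref{prop4.1} does not assume). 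For small $r_{a_-}$ or $r_{b_1}$ these errors swamp an $e^{-\rho}$ gain, so the comparison of rates you describe cannot be made to work as stated.

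The mechanism the hypotheses are actually engineered for --- and the one used in the proof the paper cites (\cite[Proposition 6.1]{doc1}, following Wu \cite{Wu}) --- is different: the strict gain comes from the slowly decaying perturbation $b_2$, via
\begin{equation*}
\frac{\mu}{p}\int_{\R^N} b_2(x)\,|t\,w_\rho|^p\,dx \;\ge\; c\,\mu\,d_0\,e^{-r_{b_2}\rho},
\end{equation*}
and the requirement $r_{b_2} < \min\{r_{a_-}, r_{b_1}, q\}$ in $(B_2)$ is precisely what guarantees that this term decays strictly slower than every error term, e.g.\ $\int |a_-|\,|w_\rho|^q = O(e^{-\min\{r_{a_-},q\}\rho})$ and $\int (1-b_1)|w_\rho|^p = O(e^{-r_{b_1}\rho})$, so that it dominates for $\rho$ large regardless of how small $r_{a_-}$ and $r_{b_1}$ are. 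A telltale symptom of the gap is that your argument never uses $\mu>0$ or the inequality $r_{b_2}<\min\{r_{a_-},r_{b_1},q\}$, whereas the proposition requires $\mu>0$ strictly --- exactly because the gain is proportional to $\mu$. Repairing your proof amounts to keeping your construction and IVT step but replacing the claimed superposition interaction by the $b_2$-gain and redoing the decay comparison against it; note also that since the hypothesis $A\to d$ carries no quantified rate, any error you attribute to the magnetic discrepancy must be handled qualitatively (concentration of $w_\rho$) rather than entered into the exponential-rate ledger as you propose.
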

	
	\begin{proof}The proof is similar to what was done in \cite[Proposition 6.1]{doc1}.
		
	\end{proof}

	\section{ Third Solution}
	
	\subsection{ Some considerations}
	To get the third solution of the $ \Plm $ problem, we will need some results that is done next. For this, we highlight the set defined below for $\lambda=0$ and $\mu=0$
	$$ M^-_{a_0,b_0}=\{u \in \HA\setminus\{0\} :\langle J'_{a_0,b_0}(u),u\rangle =0\}$$
	where 
	\begin{eqnarray*}\label{funcional.0}
		J_{a_0,b_0}&=&\frac{1}{2}\intA - \frac{1}{q} \int a_0(x)|u|^qdx - \frac{1}{p} \int b_0(x)|u|^pdx\\
		&=&\frac{1}{2}\intA - \frac{1}{q} \int a_-(x)|u|^qdx - \frac{1}{p} \int b_1(x)|u|^pdx.
	\end{eqnarray*}
	
	\begin{lemma}\label{5.1.a}
		We have
		$$\inf_{u \in M^-_{a_0,b_0}}J_{a_0,b_0}(u)=\inf_{u \in M^{\infty}}J_{\infty}(u)=m^{\infty}.$$
	\end{lemma}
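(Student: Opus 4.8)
The plan is to establish the two inequalities
$$\inf_{u\in M^-_{a_0,b_0}}J_{a_0,b_0}(u)\ \ge\ m_{\infty}\qquad\text{and}\qquad \inf_{u\in M^-_{a_0,b_0}}J_{a_0,b_0}(u)\ \le\ m_{\infty}$$
separately, where $m_{\infty}=\inf_{M_{\infty}}J_{\infty}$ is the level appearing in the statement. First I would record two structural facts. Since here $\lambda=\mu=0$, the weights are $a_0=a_-\le 0$ and $b_0=b_1\le 1$ (by $(B_1)$), so for every $v\in\HA\setminus\{0\}$
$$J_{a_0,b_0}(v)-J_{\infty}(v)=-\frac1q\int_{\R^N} a_-(x)|v|^q\,dx+\frac1p\int_{\R^N}(1-b_1(x))|v|^p\,dx\ \ge\ 0 .$$
Moreover, because $\int_{\R^N}a_-(x)|v|^q\,dx\le 0$, Lemma \ref{Nehari}$(i)$ applies to the fibering map of $J_{a_0,b_0}$: every nonzero $v$ has a unique Nehari projection, that projection lands in $M^-_{a_0,b_0}$, and for $u\in M^-_{a_0,b_0}$ one has $J_{a_0,b_0}(u)=\max_{t>0}J_{a_0,b_0}(tu)$.

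For the lower bound, fix $u\in M^-_{a_0,b_0}$ and project it instead onto $M_{\infty}$: there is a unique $s=\big(\|u\|_A^2/\|u\|_p^p\big)^{1/(p-2)}>0$ with $su\in M_{\infty}$. Combining the maximality along the $J_{a_0,b_0}$-fibre with the pointwise comparison gives
$$J_{a_0,b_0}(u)=\max_{t>0}J_{a_0,b_0}(tu)\ \ge\ J_{a_0,b_0}(su)\ \ge\ J_{\infty}(su)\ \ge\ m_{\infty},$$
and taking the infimum over $u$ yields $\inf_{M^-_{a_0,b_0}}J_{a_0,b_0}\ge m_{\infty}$.

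For the upper bound I would use the exponentially decaying ground state $w_0$ of $J_{\infty}$ (with $J_{\infty}(w_0)=m_{\infty}$ and the bounds (\ref{exp1w})--(\ref{exp2w})) and slide its mass to infinity. Set $w_y:=w_0(\cdot-y)$ and let $t_y>0$ be the number from Lemma \ref{Nehari}$(i)$, so that $t_y w_y\in M^-_{a_0,b_0}$ and hence $\inf_{M^-_{a_0,b_0}}J_{a_0,b_0}\le J_{a_0,b_0}(t_y w_y)$. As $|y|\to\infty$ I would show the two weighted terms disappear, namely $\int_{\R^N}a_-(x)|w_y|^q\,dx\to 0$ and $\int_{\R^N}(1-b_1(x))|w_y|^p\,dx\to 0$, via the change of variables $x\mapsto x+y$ together with $a_-(x)\to 0$, $1-b_1(x)\to 0$ and the decay of $|w_0|$; the term $\int|w_y|^p$ is translation invariant. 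Granting that the magnetic norm obeys $\|w_y\|_A^2\to\|w_0\|^2_{A\equiv d}$, the fibre of $J_{a_0,b_0}$ at $w_y$ converges to that of $J_{\infty}$ at $w_0$, forcing $t_y\to 1$ and $J_{a_0,b_0}(t_y w_y)\to J_{\infty}(w_0)=m_{\infty}$, which gives $\inf_{M^-_{a_0,b_0}}J_{a_0,b_0}\le m_{\infty}$ and closes the argument.

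The hard part is precisely the convergence of the magnetic Dirichlet energy under translation, since $-\Delta_A$ is \emph{not} translation invariant: the potential $A$ varies, so $\|w_0(\cdot-y)\|_A\neq\|w_0\|_A$ in general. Here I would invoke the hypothesis $A(x)\to d$ as $|x|\to\infty$, writing $|\nabla_A w_y|^2=|\,\nabla w_y-iA\,w_y|^2$ and splitting $A=d+(A-d)$; the factor $A(x)-d$ is small exactly on the region carrying the mass of $w_y$ (a neighbourhood of $y$, which escapes to infinity), so, controlling the tails with the exponential estimates (\ref{exp1w})--(\ref{exp2w}), the difference $\|w_y\|_A^2-\|w_0\|^2_{A\equiv d}$ tends to $0$. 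Since $w_0=z_0e^{-idx}$ is gauge-adapted to the constant potential $d$ (so its constant-$d$ magnetic energy is $\|z_0\|^2_{H^1}$) and $J_{\infty}(w_0)=m_{\infty}$ by the identification recorded in Section~\ref{existencia}, the limiting energy is exactly $m_{\infty}$. This magnetic-translation estimate, rather than the Nehari bookkeeping, is where the genuine effort lies.
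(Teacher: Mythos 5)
Your proposal is correct and takes essentially the same route as the paper: the lower bound by combining the fibre maximality of $u \in M^-_{a_0,b_0}$ (via Lemma \ref{Nehari}(i), since $\int_{\R^N} a_-|u|^q\,dx \le 0$) with the pointwise comparison $J_{a_0,b_0} \ge J_{\infty}$ coming from $a_-\le 0$ and $b_1\le 1$ after projecting onto $M_{\infty}$, and the upper bound by translating the ground state $w_0$ to infinity, projecting onto $M^-_{a_0,b_0}$, and showing the two weighted integrals vanish so that the projection parameter tends to $1$. If anything, your explicit handling of the failure of translation invariance of the magnetic energy --- splitting $A = d + (A-d)$ and using the exponential decay of $w_0$ to show $\|w_0(\cdot-y)\|_A^2$ converges to the constant-potential energy --- is more careful than the paper's proof, which tacitly treats $\|w_k\|_A$ as independent of $k$.
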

	
	\begin{proof}
		Let $w_k$ be as defined above. Because of we are working with $ \lambda = 0 $, we have $a(x)=\lambda a_+(x)+a_-(x)=a_-(x)<0$ from where $\int_{\R^N} a_-| t^-(w_k)w_k|^qdx\leq 0,$ hence by Lemma \ref{Nehari}(i) there is only one $t^-(w_k)>\left(\frac{2-q}{p-q}\right)^{\frac{1}{p-2}}$ such that $t^-(w_k)w_k \in  M^-_{a_0,b_0}$ for all $k>0,$ that is, $ J'_{a_0,b_0} (t^-(w_k)w_k)=0, $ giving us
		\begin{equation}\label{t_menos}
		||t^-(w_k)w_k||^2_A= \int_{\R^N} a_-| t^-(w_k)w_k|^qdx +\int_{\R^N} b_-| t^-(w_k)w_k|^p dx.
		\end{equation}
		As $w_0$ is a solution of problem ($E_{\infty}$) and remembering that the functional associated with ($E_{\infty}$) is given by $I(u)=\frac{1}{2}||u||_A^2-\frac{1}{p}||u||_p^p,$ and $I'(u)=||u||_A^2- ||u||_p^p$ we have
		$$I'(w_0)w_0=||w_0||_A^2- ||w_0||_p^p=0.$$
		Therefore
		\begin{eqnarray*}
			\nonumber  m_{\infty} &=& I(w_0)=\frac{1}{2}||w_0||_A^2-\frac{1}{p}||w_0||_p^p \\
			\nonumber    &=&  \frac{1}{2}||w_0||_A^2-\frac{1}{p}||w_0||_A^2=\frac{p-2}{2p}||w_0||_A^.
		\end{eqnarray*}
		Being $w_0$ solution of problem ($E_{\infty}$) follows that $w_k(x)=w_0(x+ke)$. With this and $ I'(w_0)w_0=0,$ we have $I'(w_k)w_k=0$. So that
		\begin{equation}\label{m_infty>0}
		|| w_k||^2_A= \int_{\R^N}  |  w_k|^qdx =\frac{2p}{p-2}m^{\infty} \;\; \mbox{for all}\;\;k\geq 0.
		\end{equation}	%	Ainda, pela condição (A) e por um argumento semelhante ao utilizado na demonstração do Lemma \ref{Lemmadecompacidade} we have
		It is known that $ w_n $ is bounded in $L^{r'}$ and $ w_n \rightarrow 0 $ a.e., by Theorem \cite[Theorem 13.44]{Hewitt} that $ w_n \rightharpoonup 0 $ weakly in $ L^{r'}$. By the condition ($A$), $ a_- \in (L^{r'})'=L^r$ we get
		\begin{equation}\label{int_a}
		\int_{\R^N} a_- |w_k|^qdx \rightarrow 0 \;\; \mbox{as}\;\;k\rightarrow \infty.
		\end{equation}
		In addition, by $(B_1)$ and $(B_2)$ we get
		\begin{equation}\label{int_b}
		\int_{\R^N}(1-b_1) | w_k|^qdx =  \int_{B(0,R)}(1-b_1) | w_k|^qdx+ \int_{B^c(0,R)}(1-b_1) | w_k|^qdx \rightarrow 0,
		\end{equation}
		as $|w_k| \rightarrow \infty$.
		By (\ref{t_menos}), (\ref{int_a}) and (\ref{int_b}) we have that $t^-(w_k)\rightarrow 1 $ as $ k\rightarrow \infty.$ 
		Likewise
		$$ \lim_{k\rightarrow \infty} J_{a_0,b_0} (t^-(w_k)w_k)= \lim_{k\rightarrow \infty} J_{\infty} (t^-(w_k)w_k)=m_{\infty}.$$
		Thus
		\begin{eqnarray}\label{geq}
		m_{\infty} &=& \inf_{u \in M^{\infty}}J_{\infty}(u)=\lim_{k\rightarrow \infty} J_{\infty} (t^-(w_k)w_k)\geq \inf_{u \in M^-_{a_0,b_0}}J_{a_0,b_0}(u).
		\end{eqnarray}
		We also have to $u\in  M_{a_0,b_0}$, by Lemma \ref{Nehari}(i), $J_{a_0,b_0}(u)=\sup_{ t\geq 0}J_{a_0,b_0}(tu),$ and more, there is a single $t^{\infty}>0$ such that $t^{\infty}u \in M^{\infty}$. So
		\begin{eqnarray}
		\nonumber  J_{a_0,b_0} (t^{\infty}u)   &=&\frac{1}{2}||t^{\infty}u||_A^2-\frac{(t^{\infty})^q}{q}\int_{\R^N}a_-|u|^qdx  -\frac{(t^{\infty})^p}{p}\int_{\R^N}b_1|u|^pdx  \\
		\nonumber    &\geq&  \frac{1}{2}||t^{\infty}u||_A^2  -\frac{(t^{\infty})^p}{p}\int_{\R^N}|u|^pdx \\
		\nonumber   &=&  J_{\infty} (t^{\infty}u) \geq m_{\infty},
		\end{eqnarray}
		therefore
		\begin{equation}\label{leq}
		\inf_{u \in M_{a_0,b_0}} J_{a_0,b_0} (t^{\infty}u)\geq m_{\infty}.
		\end{equation}
		By (\ref{geq}) and (\ref{leq})
		$$\inf_{u \in M_{a_0,b_0}}J_{a_0,b_0}(u)=\inf_{u \in M^{\infty}}J_{\infty}(u)=m^{\infty}.$$
	\end{proof}

	To prove our result we will need this lemma that establishes values of $\lambda$ and $\mu$ suitable values to get the fourth solution of the problem.
	
	\begin{lemma}\label{Lemma5.5}
		Exist $\lambda_0>0$ and $\mu_0>0$ with
		$$
		\lambda_0^{p-2}(1+\mu_0||b_1||_{\infty})^{2-q}<\left( \frac{q}{2}\right)^{p-2}\Upsilon_0,
		$$
		such that for all $\lambda \in (0,\lambda_0)$ and $\mu \in (0,\mu_0)$, we have
		$$\int_{\R^N}\frac{x}{|x|}(|\nabla u|^2 + u^2)dx \neq 0$$
		for all $u\in M^-_{a_{\lambda},b_{\mu}}$ with $ \jf(u)< m^+_{a_{\lambda},b_{\mu}}+m^{\infty}$.
	\end{lemma}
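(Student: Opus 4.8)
The plan is to argue by contradiction, exploiting the fact that as $(\lambda,\mu)\to(0,0)$ the threshold $m^+_{\lambda,\mu}+m_\infty$ collapses to $m_\infty$, which by Lemma \ref{5.1.a} is the infimum of $J_{a_0,b_0}$ on $M^-_{a_0,b_0}$ but is \emph{not} attained there. A function whose energy is that low is therefore forced to be a single concentrated bump that has escaped to infinity, and such a bump has a barycenter pointing in a definite direction, hence a nonzero integral $\int_{\R^N}\frac{x}{|x|}(|\nabla u|^2+u^2)\,dx$.

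Concretely, suppose the statement fails. Then there are sequences $\lambda_n\downarrow 0$, $\mu_n\downarrow 0$ and $u_n\in M^-_{a_{\lambda_n},b_{\mu_n}}$ with $J_{\lambda_n,\mu_n}(u_n)<m^+_{\lambda_n,\mu_n}+m_\infty$ and
$$\int_{\R^N}\frac{x}{|x|}\left(|\nabla u_n|^2+u_n^2\right)dx=0.$$
First I would record the boundedness of $\{u_n\}$ in \HA: since $u_n$ lies on the Nehari manifold and its energy is squeezed between $m^-_{\lambda_n,\mu_n}$ and $m^+_{\lambda_n,\mu_n}+m_\infty$, the coercivity of Lemma \ref{bounded}, uniform for small parameters, gives a uniform bound. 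Next, since $a_{\lambda_n}=\lambda_n a_++a_-\to a_-$ and $b_{\mu_n}=b_1+\mu_n b_2\to b_1$, while $m^+_{\lambda_n,\mu_n}\to 0^-$ and $m^-_{\lambda_n,\mu_n}\to m_\infty$ as the parameters vanish, I would deduce $J_{\lambda_n,\mu_n}(u_n)\to m_\infty$. Picking $s_n>0$ with $s_nu_n\in M^-_{a_0,b_0}$ one checks $s_n\to 1$, so $\{s_nu_n\}$ is a minimizing sequence for $J_{a_0,b_0}$ on $M^-_{a_0,b_0}$ at level $m_\infty$, and by the scaling $\int\frac{x}{|x|}(|\nabla(s_nu_n)|^2+(s_nu_n)^2)=s_n^2\int\frac{x}{|x|}(|\nabla u_n|^2+u_n^2)=0$ its barycenter still vanishes.

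The heart of the argument is a concentration analysis of this minimizing sequence, run exactly as in the proof of Lemma \ref{Lemmadecompacidade}. Setting $\delta=\limsup_n\sup_{y}\int_{B(y,1)}|s_nu_n|^p$, the case $\delta=0$ is excluded because it forces $\|s_nu_n\|_p\to 0$ and hence, through the Nehari identity, $s_nu_n\to 0$, contradicting the positive energy $m_\infty$. Thus $\delta>0$ and there are points $y_n$ with $\int_{B(y_n,1)}|s_nu_n|^p\ge\delta/2$; after translating by $y_n$ the sequence converges (up to a phase $e^{i\theta_n}$) to the ground state $w_0$ of $(P_\infty)$, no mass being lost since a split into two profiles would cost $2m_\infty$. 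I must then show $|y_n|\to\infty$: if $\{y_n\}$ stayed bounded, the limit $u_0\neq 0$ would lie in $M^-_{a_0,b_0}$ and attain $m_\infty$, but hypothesis $(C_1)$ (so that $\int b_1|u_0|^p<\int|u_0|^p$) together with $a_-\le 0$, $a_-\not\equiv 0$ yields, via the projection $t^\infty u_0\in M_\infty$,
$$J_{a_0,b_0}(u_0)\ge J_{a_0,b_0}(t^\infty u_0)>J_\infty(t^\infty u_0)\ge m_\infty,$$
contradicting attainment. Hence $|y_n|\to\infty$.

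Finally I would compute the barycenter from the concentration profile. Writing $x=y_n+z$ and using the exponential decay (\ref{exp1w})--(\ref{exp2w}) of $w_0$ to localize the mass near $z=0$, one obtains
$$\int_{\R^N}\frac{x}{|x|}\left(|\nabla (s_nu_n)|^2+(s_nu_n)^2\right)dx=\frac{y_n}{|y_n|}\,\|w_0\|_A^2+o(1),\qquad \|w_0\|_A^2=\frac{2p}{p-2}m_\infty>0,$$
so the left-hand side has norm tending to a positive constant, contradicting its vanishing and proving the lemma. The main obstacle is the concentration step: making rigorous, in the magnetic framework, the profile decomposition with its undetermined phase and the exclusion of dichotomy, together with the uniform control as $(\lambda,\mu)\to(0,0)$; the non-attainment input coming from $(C_1)$ and $(C_2)$ is precisely what guarantees the bump cannot remain at finite distance.
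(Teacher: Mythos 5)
The paper offers no inline proof here at all---it simply defers to \cite[Lemma 7.6]{doc1}, whose argument (the magnetic adaptation of the corresponding lemma in Wu \cite{Wu}) is precisely the contradiction--concentration--barycenter scheme you sketch. Your proposal is correct in outline and essentially coincides with that cited route: the non-attainment of $m_{\infty}$ on $M^-_{a_0,b_0}$ forced by $(C_1)$ (via the strict chain $J_{a_0,b_0}(u_0)\geq J_{a_0,b_0}(t^{\infty}u_0)>J_{\infty}(t^{\infty}u_0)\geq m_{\infty}$) drives the concentration points to satisfy $|y_n|\rightarrow\infty$, whence the integral behaves like $\frac{y_n}{|y_n|}\left\|w_0\right\|_A^2+o(1)\neq 0$, exactly as in the reference.
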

	
	\begin{proof} The proof is in accordance with what was done in \cite[Lemma 7.6]{doc1}.
	\end{proof}

	\section{ Fourth Solution}
	We will work in this section with estimates of the energy levels of the functional associated with the main problem to prove the existence of a solution whose energy level satisfies the conditions of Proposition \ref{Lemmadecompacidade}(ii), that is, a distinct solution of the three solutions already found in previous sections.
	For $ \alpha >0$, we define
	$$ J_{0,\alpha b_0}(u)  =  \frac{1}{2}\int_{\R^N} 	|\nabla_A u|^2 +u^2 dx  -  \frac{1}{p}\int_{\R^N} 	\alpha b_0|u|^p dx,$$
	
	$$ M_{0,\alpha b_0} = \{  u\in \HA \setminus\{0\} ; \langle J'_{0,\alpha b_0}(u),u  \rangle = 0\}.$$
	We now define the following subset of unitary ball
	$$ \mathcal{B} = \{   u\in \HA \setminus\{0\} ; u\geq 0 \;\; \mbox{e}\;\; ||u||_{A} = 1 \}.$$
	Let us recall that for every  $ u \in \HA \setminus\{0\}$   there exists a unique $t^-(u) > 0$ and $ t_0(u) > 0$ such that $t^-(u) \in M^-_{a_{\lambda},b_{\mu}}$ and $ t_0(u)  \in   M_{0, b_0}$. In order to apply the minimax argument of Bahri-Li we present the following result.
	
	\begin{lemma}\label{Lemma7.1}
		
		For each $ u  \in  \mathcal{B}$ we will have
		
		\begin{description}
			\item[$(i)$   ]    There is a single $t^{\alpha}_0 =   t^{\alpha}_0 (u) > 0$ such that $  t^{\alpha}_0 u  \in   M_{0, \alpha b_0}$ and
			$$\sup_{t\geq 0}  J_{0,\alpha b_0}   (tu)  =   J_{0,\alpha b_0} (t^{\alpha}_0u)  = \frac{  p  - 2}{2p}\left( \int_{ \R^N }\alpha b_0 |u|^p dx \right)^\frac{  - 2}{p - 2}.$$
			\item[ $(ii)$  ]     For $  \rho   \in  (0, 1),$
			
			$$ J_{ a_{\lambda}, b_{\mu}}(t^- (u)u) \geq \frac{ (1  -  \rho )^{\frac{p}{p - 2}}}{(1+  \mu  || b_2/b_1 ||_{  \infty} )^{\frac{2}{p - 2}}} J_{0, b_0} (  t_0 (u)u ) - \frac{ 2  - q}{2q} ( \rho  S_p )^{\frac{q}{q - 2}}( \lambda  ||a_+||_{q^* })^{\frac{ 2}{2 - q}}$$
			and
			$$J_{ a_{\lambda}, b_{\mu}}(t^- (u)u)  \leq  \frac{(1 + \rho )^{\frac{p}{p-2}}}{2}J_{0, b_0}( t_0 (u)u)+\frac{ 2 -  q}{2q}( \rho S_p)^{\frac{q}{q - 2}}( \lambda  ||  a_+  ||_{ q^*} + || a_-  ||_{ q^*}) ^{\frac{ 2}{2 - q}}.$$
		\end{description}
		
	\end{lemma}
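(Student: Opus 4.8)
The plan is to reduce both items to a one–variable analysis of the fibering maps $t \mapsto J_{\cdot}(tu)$ along the ray through a fixed $u \in \mathcal{B}$, exploiting the normalization $\|u\|_A = 1$ throughout. For item $(i)$ I would argue by direct computation: since $\|u\|_A = 1$, the map $g(t) = J_{0,\alpha b_0}(tu) = \frac{t^2}{2} - \frac{t^p}{p}\int_{\R^N}\alpha b_0|u|^p\,dx$ has, writing $B = \int_{\R^N}\alpha b_0|u|^p\,dx > 0$, the unique positive critical point $t_0^\alpha = B^{-1/(p-2)}$ of $g'(t) = t - B t^{p-1}$, which is a global maximum because $p>2$ forces $g(t) \to -\infty$. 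Substituting $B(t_0^\alpha)^p = (t_0^\alpha)^2$ gives $g(t_0^\alpha) = \left(\frac12 - \frac1p\right)(t_0^\alpha)^2 = \frac{p-2}{2p}B^{-2/(p-2)}$, which is the asserted value, and uniqueness of the critical point identifies $t_0^\alpha u \in M_{0,\alpha b_0}$ as the maximizer.

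For item $(ii)$ the starting point is the characterization established just after Lemma \ref{teorema3.1}, namely $J_{a_\lambda,b_\mu}(t^-(u)u) = \max_{t\ge 0}J_{a_\lambda,b_\mu}(tu)$ under the standing smallness condition. I would split the quadratic energy as $\frac{t^2}{2} = (1\mp\rho)\frac{t^2}{2} \pm \rho\frac{t^2}{2}$ and decompose $J_{a_\lambda,b_\mu}(tu) = g_1^\pm(t) + g_2^\pm(t)$, where $g_1^\pm$ carries the convex term with quadratic coefficient $1\mp\rho$ and $g_2^\pm$ carries the concave term together with $\pm\rho\frac{t^2}{2}$. Applying $\max(g_1^-+g_2^-) \ge \max g_1^- + \min g_2^-$ for the lower estimate and $\max(g_1^++g_2^+) \le \max g_1^+ + \max g_2^+$ for the upper one decouples the two pieces. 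Each $g_1^\pm$ is maximized exactly as in $(i)$, giving $\max_t g_1^\pm = \frac{p-2}{2p}(1\mp\rho)^{p/(p-2)}\left(\int_{\R^N}b_\mu|u|^p\right)^{-2/(p-2)}$; since $b_0 = b_1 \le b_\mu \le (1+\mu\|b_2/b_1\|_\infty)b_0$ and the exponent is negative, I can bound this factor from below by $(1+\mu\|b_2/b_1\|_\infty)^{-2/(p-2)}I_0^{-2/(p-2)}$ and from above by $I_0^{-2/(p-2)}$, with $I_0 = \int_{\R^N}b_0|u|^p$, and then recognize $\frac{p-2}{2p}I_0^{-2/(p-2)} = J_{0,b_0}(t_0(u)u)$ from $(i)$. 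This produces the coefficients $\frac{(1-\rho)^{p/(p-2)}}{(1+\mu\|b_2/b_1\|_\infty)^{2/(p-2)}}$ and $(1+\rho)^{p/(p-2)}$ multiplying $J_{0,b_0}(t_0(u)u)$.

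The concave pieces $g_2^\pm$ I would control by a Young-type optimization in one variable. Using $\int a_-|u|^q \le 0$ and H\"older with $q' = p/(p-q)$ gives $\int a_\lambda|u|^q \le \lambda\|a_+\|_{q'}\|u\|_p^q$ (needed for the lower bound) and $-\int a_\lambda|u|^q \le (\lambda\|a_+\|_{q'}+\|a_-\|_{q'})\|u\|_p^q$ (needed for the upper bound). The resulting functions $s \mapsto \rho\frac{s^2}{2} - \frac{C}{q}s^q$ and $s \mapsto -\rho\frac{s^2}{2} + \frac{C'}{q}s^q$ have, since $1<q<2$, a unique interior extremum with value $\mp\frac{2-q}{2q}\rho^{-q/(2-q)}C^{2/(2-q)}$. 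The last ingredient is the normalization $\|u\|_p^2 \le S_p^{-1}$, which is precisely the definition of $S_p$ in (\ref{Sp}) combined with $\|u\|_A = 1$; it turns $\|u\|_p^{2q/(2-q)} = (\|u\|_p^2)^{q/(2-q)}$ into $S_p^{-q/(2-q)}$, and via $\rho^{-q/(2-q)}S_p^{-q/(2-q)} = (\rho S_p)^{q/(q-2)}$ delivers exactly the remainder terms $\mp\frac{2-q}{2q}(\rho S_p)^{q/(q-2)}(\cdots)^{2/(2-q)}$ carrying $\|a_+\|_{q'}$ (lower) and $\|a_+\|_{q'}+\|a_-\|_{q'}$ (upper) inside.

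The main obstacle I anticipate is bookkeeping the direction of each inequality: the two brackets share the variable $t$ in the original functional, so decoupling them forces the use of $\max(f+g) \ge \max f + \min g$ on one side and $\max(f+g) \le \max f + \max g$ on the other, while the sign of $\int a_-|u|^q$ must be exploited oppositely in the two branches (discarding $\lambda a_+$ favorably below, but retaining both norms above). The single most delicate point is matching exponents, namely noticing that $S_p$ in (\ref{Sp}) is defined with $\|u\|_p^2$ (not $\|u\|_p$) in the denominator, so that the Sobolev step produces $S_p^{-q/(2-q)}$ and hence the clean power $(\rho S_p)^{q/(q-2)}$; the factor $\tfrac12$ in front of $(1+\rho)^{p/(p-2)}$ in the upper estimate should then emerge from a further coarse bound retained in that branch.
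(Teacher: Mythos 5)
Your proposal is correct and follows essentially the same route as the paper: part $(i)$ is the identical one-variable fibering-map computation, and in part $(ii)$ your decoupling $\max_t(g_1^-+g_2^-)\geq \max_t g_1^- + \inf_t g_2^-$, evaluated at the maximizer of $g_1^-$, is exactly the paper's evaluation of $J_{a_\lambda,b_\mu}$ at $t_0^\alpha u$ with $\alpha=(1+\mu\|b_2/b_1\|_\infty)/(1-\rho)$; your interior optimization of the concave pieces is the Young-inequality step there, the bounds $b_0\leq b_\mu\leq(1+\mu\|b_2/b_1\|_\infty)b_0$ and $\|u\|_p^q\leq S_p^{-q/2}$ are used in the same way, and both arguments close with $\sup_{t\geq 0}J_{a_\lambda,b_\mu}(tu)=J_{a_\lambda,b_\mu}(t^-(u)u)$ from Lemmas \ref{Nehari} and \ref{teorema3.1}.

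One point in your last sentence needs correction, though. The factor $\tfrac{1}{2}$ in front of $(1+\rho)^{p/(p-2)}$ cannot ``emerge from a further coarse bound'': since $J_{0,b_0}(t_0(u)u)>0$, inserting $\tfrac12$ makes the upper bound \emph{smaller}, i.e.\ strictly stronger, whereas any additional coarsening can only enlarge the right-hand side. Your computation gives
$$\max_{t\geq 0}\left[\frac{(1+\rho)t^2}{2}-\frac{t^p}{p}\int_{\R^N}b_0|u|^p\,dx\right]=(1+\rho)^{\frac{p}{p-2}}J_{0,b_0}(t_0(u)u),$$
with no $\tfrac12$, and the paper's own proof produces exactly this quantity as well; the $\tfrac12$ appearing in the statement and in the final display of the paper's proof is unjustified and is evidently a typo, consistent with the fact that the subsequent application (\ref{7.3}) uses the estimate in the form $n_{a_\lambda,b_\mu}\leq(1+\rho)^{p/(p-2)}n_{0,b_0}+\cdots$ without it. So you should state and prove the upper estimate with coefficient $(1+\rho)^{p/(p-2)}$; nothing downstream in Theorem \ref{teo7.3} is affected by dropping the $\tfrac12$.
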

	
	\begin{proof}
		\begin{description}
			\item[	$(i)$   ]  For each $ u  \in   \mathcal{B}$, consider
		\end{description}	
		$$ K_u(t)  =   J_{0,\alpha b_0}   (tu)  = \frac{ 1}{2}t^2  -  \frac{ 1}{2}t^p\int_{ \R^N }\alpha b_0 |u|^p dx,$$
		so $ K_u(t)  \rightarrow   -  \infty$  as $ t  \rightarrow   \infty$ and
		
		$$K_u'(t)  =  t  -  t^{p - 1} \int_{ \R^N }\alpha b_0 |u|^p dx.$$
		Thus, $ K_u'( t^{\alpha}_0 )  =  0,$ and $  t^{\alpha}_0u  \in   M_{0, \alpha b_0}$ as
		
		$$ t^{\alpha}_0=t^{\alpha}_0(u)  = \left( \int_{ \R^N }\alpha b_0 |u|^p dx\right)^{ \frac{1}{2 - p}}> 0.$$
		Moreover, $ K_u''(t)=  1  -  (p  -1)t^{p - 2} \int_{ \R^N }\alpha b_0 |u|^p dx.$ So, in $t^{\alpha}_0(u)$ we have
		$$ K_u''( t^{\alpha}_0 )  =  2  - p <0,$$
		that is, $t^{\alpha}_0$ is a maximum point of $K_u$.
		Then, there exists a unique $  t^{\alpha}_0=t^{\alpha}_0(u)> 0 $ such that $t^{\alpha}_0 u  \in   M_{0, \alpha b_0} $ and also by definition $ K_u (t) = J (tu) $ we get	
		$$\sup_{t \geq 0}  J_{0,\alpha b_0}   (tu)  =   J_{0,\alpha b_0}(t^{\alpha}_0u) = \frac{ p  -  2}{2p}  \left( \int_{ \R^N } \alpha b_0 |u|^p dx\right)^{ \frac{-2}{2 - p}}.$$

		\begin{description}
			\item[	$(ii)$] Consider $ \alpha   =  (1+  \mu  || b_2/b_1 ||_{  \infty} )/(1  -  \rho ).$ Then, for each $ u  \in  \mathcal{B}$ and $  \rho   \in  (0, 1),$ we have   
		\end{description} 
		\begin{eqnarray}\label{7.1}
		\nonumber    \int_{ \R^N }a_{\lambda}| t^{\alpha}_0u |^q dx  &\leq &  \lambda S_p^{\frac{-q}{2}}  ||  a_+  ||_{q^*}|| t^{\alpha}_0u ||^q_{A}  \\
		\nonumber    &\leq& \frac{ 2  - q}{2}\left( ( \rho  S_p )^{\frac{-q}{ 2}} \lambda  ||  a_+  ||_{q^* }\right)^{\frac{ 2}{2 - q}}+\frac{q}{ 2}\left( ( \rho   )^{\frac{q}{ 2}}   ||t^{\alpha}_0u ||_A\right)^{\frac{ 2}{q} } \\
		&=&\frac{ 2  - q}{2} ( \rho  S_p )^{\frac{q}{ q-2}}( \lambda  ||  a_+  ||_{q^* })^{\frac{ 2}{2 - q}}+\frac{q \rho}{ 2} ||t^{\alpha}_0u ||_A^2.
		\end{eqnarray}
		Then, for the part $ (i) $ and by (\ref{7.1}),	
		\begin{eqnarray}
		\nonumber   \sup_{t\geq0} J_{ a_{\lambda}, b_{\mu}}(tu) &\geq &  J_{ a_{\lambda}, b_{\mu}}(t^{\alpha}_0 u) \\
		\nonumber    &\geq & \frac{1  -  \rho}{2}|| t^{\alpha}_0u||^2_{A}-\frac{  2  - q}{2q}( \rho  S_p )^{\frac{q}{q-2}}( \lambda  ||  a_+  ||_{ q^*})^{\frac{2}{2 - q} } \\
		\nonumber    &&-\frac{(1+  \mu  || b_2/b_1 ||_{  \infty} )}{p}\int_{ \R^N } b_0 |t^{\alpha}_0u|^p dx\\
		\nonumber    &=&(1  -  \rho)J_{0,\alpha b_0}(t^{\alpha}_0u) -\frac{  2  - q}{2q}( \rho  S_p )^{\frac{q}{q-2}}( \lambda  ||  a_+  ||_{q^*})^{\frac{2}{2 - q}}\\
		\nonumber    &=&\frac{(p-2)(1  -  \rho)^{\frac{p}{p-2}}}{2p((1+  \mu  || b_2/b_1 ||_{  \infty} )\int_{ \R^N } b_0 |u|^p dx )^{\frac{2}{p-2}}} -\frac{  2  - q}{2q}( \rho  S_p )^{\frac{q}{q-2}}( \lambda  ||  a_+  ||_{ q^*})^{\frac{2}{2 - q}}\\
		\nonumber    &=&\frac{(1  -  \rho)^{\frac{p}{p-2}}}{(1+  \mu  || b_2/b_1 ||_{  \infty} )^{\frac{2}{p-2}}}J_{0,\alpha b_0}(t_0(u)u) -\frac{  2  - q}{2q}( \rho  S_p )^{\frac{q}{q-2}}( \lambda  ||  a_+  ||_{ q^*})^{\frac{2}{2 - q}}.
		\end{eqnarray}
		Still, by Lemma \ref{Nehari} and by Theorem \ref{teorema3.1},
		$$\sup_{t\geq0} J_{ a_{\lambda}, b_{\mu}}(tu) =   J_{ a_{\lambda}, b_{\mu}}(t^-(u) u).$$
		Thus,
		$$ J_{ a_{\lambda}, b_{\mu}}(t^-(u)u) \geq  \frac{(1  -  \rho)^{\frac{p}{p-2}}}{(1+  \mu  || b_2/b_1 ||_{  \infty})^{\frac{2}{p-2}}}J_{0,\alpha b_0}(t_0(u)u) -\frac{  2  - q}{2q}( \rho  S_p )^{\frac{q}{q-2}}(\lambda  ||  a_+  ||_{ q^*})^{\frac{2}{2 - q}}.$$
		Further, by H\"{o}lder, Sobolev and Young's inequalities
		\begin{eqnarray}
		\nonumber    \left| \int_{ \R^N} a_{\lambda}|tu|^q dx\right|  &\leq&\int_{ \R^N} a_{\lambda}|tu|^q dx \leq  ( \lambda  ||  a_+  ||_{ q^*} + || a_-  ||_{ q^*}) S_p^{\frac{-q}{2}}|| tu ||^q_{A} \\
		\nonumber    &\leq & \frac{  2  - q}{2}( \rho  S_p )^{\frac{q}{q-2}}( \lambda  ||  a_+  ||_{ q^*} + || a_-  ||_{ q^*})^{\frac{2}{2 - q}} + \frac{q\rho}{2}||tu||^2_{A}.
		\end{eqnarray}
		Also,
		\begin{eqnarray}
		\nonumber    J_{a_{\lambda}, b_{\mu}}(tu) & \leq &   \frac{ (1 +  \rho )}{2}t^2 +  \frac{  2  - q}{2q}( \rho  S_p)^{\frac{q}{q-2}}( \lambda  ||  a_+  ||_{ q^*} + || a_-  ||_{ q^*})^{\frac{2}{2 - q}}- \frac{1}{p}\int_{ \R^N} b_0 |tu|^p dx\\
		\nonumber  &\leq & \frac{(1 +  \rho )^{\frac{p}{p-2}}}{2} J_{0, b_0}( t_0 (u)u) +  \frac{  2  - q}{2q}( \rho S_p)^{\frac{q}{q-2}}( \lambda  ||  a_+  ||_{ q^*} + || a_-  ||_{ q^*})^{\frac{2}{2 - q}}.
		\end{eqnarray}
		Then,
		$$J_{ a_{\lambda}, b_{\mu}}(t^-(u)u) \leq \frac{ (1 + \rho )^{\frac{p}{p-2}}}{2}J_{0, b_0}( t_0 (u)u) +  \frac{  2  - q}{2q}( \rho  S_p )^{\frac{q}{q-2}}( \lambda  ||  a_+  ||_{ q^*} + || a_-  ||_{ q^*})^{\frac{2}{2 - q}}.$$
		As we wanted to prove.
		
	\end{proof}

	Note that as $m^-_{a_{\lambda}, b_{\mu} }>0$ for all $\lambda\in (0,\lambda_0)$ and $\mu \in (0,\mu_0)$, we define
	$$I_{a_{\lambda}, b_{\mu} }(u)= \sup_{t \geq 0}  J_{a_{\lambda}, b_{\mu} }   (tu)  =   J_{a_{\lambda}, b_{\mu} }(t^-(u)u)>0,$$
	where $t^-(u)u \in M_{a_{\lambda}, b_{\mu} }^- $. We can see that if  $\lambda, \mu$ and $||a_-||_{ q^*}$ are sufficiently small, we can use the minimax Bahri-Li's argument \cite{BahriLi} for our functional  $J_{ a_{\lambda}, b_{\mu}} $. Let
	
	$$\Gamma_{a_{\lambda}, b_{\mu}} =\{  \gamma \in C( \overline{B^N(0,k)},\mathbb{B}); \gamma|_{\partial B^N(0,k)}=w_k/||w_k||_{A}\}$$
	be for values of $ l $ large enough.
	
	We define
	$$ n_{a_{\lambda}, b_{\mu}} =\inf_{\gamma \in \Gamma_{a_{\lambda}, b_{\mu}}} \sup_{x\in\R^N} I_{a_{\lambda}, b_{\mu}}(\gamma(x)) \;\;     \mbox{e}   $$
	
	$$n_{0, b_{0}} =\inf_{\gamma \in \Gamma_{0, b_{0}}} \sup_{x\in\R^N} I_{0, b_{0}}(\gamma(x))  $$
	By Lemma \ref{Lemma7.1}(ii), for $0<\rho<1,$ we have
	\begin{equation}\label{7.2}
	n_{a_{\lambda}, b_{\mu}}\geq  \frac{(1  -  \rho)^{\frac{p}{p-2}}}{(1+  \mu  || b_2/b_1 ||_{  \infty})^{\frac{2}{p-2}}}n_{0, b_0} -\frac{  2  - q}{2q}( \rho  S_p )^{\frac{q}{q-2}}(\lambda  ||  a_+  ||_{ q^*})^{\frac{2}{2 - q}}
	\end{equation}
	and
	\begin{equation}\label{7.3}
	n_{a_{\lambda}, b_{\mu}}\leq  (1  +  \rho)^{\frac{p}{p-2}}n_{0, b_0} + \frac{  2  - q}{2q}( \rho  S_p )^{\frac{q}{q-2}}(\lambda  ||  a_+  ||_{ q^*} + ||  a_-  ||_{ q^*}   )^{\frac{2}{2 - q}}.
	\end{equation}
	We will need estimates of energy levels as follows.
	
	\begin{lemma}
		$m^{\infty}< n_{0,b_0}<2m^{\infty}.$
	\end{lemma}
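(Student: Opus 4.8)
The plan is to read both inequalities off the explicit formula for the reduced energy $I_{0,b_0}$ obtained in Lemma~\ref{Lemma7.1}(i). For $u\in\mathcal{B}$ one has $I_{0,b_0}(u)=\frac{p-2}{2p}\left(\int_{\R^N}b_0|u|^p\,dx\right)^{-2/(p-2)}$, and the same computation with the weight replaced by the constant $1$ gives $m^{\infty}=\inf_{u\in\mathcal{B}}\frac{p-2}{2p}\left(\int_{\R^N}|u|^p\,dx\right)^{-2/(p-2)}$, the infimum being attained exactly at the translates of the normalized ground state $w_0/\|w_0\|_A$. Since $b_0=b_1\le 1$ by $(B_1)$, we have $\int_{\R^N}b_0|u|^p\,dx\le\int_{\R^N}|u|^p\,dx$, hence $I_{0,b_0}(u)\ge m^{\infty}$ for every $u\in\mathcal{B}$; taking the supremum along any admissible path and then the infimum over $\Gamma_{0,b_0}$ gives the soft bound $n_{0,b_0}\ge m^{\infty}$ at once.

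To upgrade this to the strict inequality $m^{\infty}<n_{0,b_0}$ I would run the linking/barycenter mechanism underlying the Bahri--Li argument. First, equality $I_{0,b_0}(u)=m^{\infty}$ would force both $\int_{\R^N}(1-b_1)|u|^p\,dx=0$ and $u$ to be a ground state; by $(C_1)$ the set $\{b_1<1\}$ has positive measure while a ground state is everywhere positive, so $m^{\infty}$ is never attained on $\mathcal{B}$ and is approached by $I_{0,b_0}$ only along bubbles escaping to infinity. I would then define a barycenter map $\beta$ on a sublevel set $\{I_{0,b_0}\le m^{\infty}+\varepsilon_0\}$, using concentration--compactness to ensure that almost-minimizers are close to a single translated copy of $w_0$, so that $\beta$ is well defined there with $|\beta|$ large. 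The admissibility condition (which I read as pinning $\gamma$ on $\partial B^N(0,k)$ to the family of normalized translated bubbles $w_{(\cdot)}/\|w_{(\cdot)}\|_A$) makes $\beta\circ\gamma$ agree on $\partial B^N(0,k)$ with a map of nonzero degree onto a large sphere. Consequently, if one had $\sup_x I_{0,b_0}(\gamma(x))\le m^{\infty}+\varepsilon_0$ for some admissible $\gamma$, then $\beta\circ\gamma$ would extend this degree-nonzero boundary map continuously over the whole ball into $\R^N\setminus\{0\}$, which is impossible. Hence $\sup_x I_{0,b_0}(\gamma(x))>m^{\infty}+\varepsilon_0$ for every $\gamma$, i.e.\ $n_{0,b_0}>m^{\infty}$.

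For the upper inequality I would exhibit one explicit competitor and estimate along it. The natural choice is the translating ground state $\gamma_0(y)=w_0(\cdot-y)/\|w_0(\cdot-y)\|_A$, $y\in\overline{B^N(0,k)}$, which is admissible. By Lemma~\ref{Lemma7.1}(i), the bound $I_{0,b_0}(\gamma_0(y))<2m^{\infty}$ is equivalent (using $\|w_0\|_A^2=\|w_0\|_p^p$ and $m^{\infty}=\frac{p-2}{2p}\|w_0\|_A^2$) to $\int_{\R^N}b_1(x)|w_0(x-y)|^p\,dx>2^{-(p-2)/2}\|w_0\|_p^p$ for every $y$. Inserting the lower bound $b_1\ge 1-c_0e^{-r_{b_1}|x|}$ from $(B_1)$, this reduces to showing that the deficit $c_0\int_{\R^N}e^{-r_{b_1}|x|}|w_0(x-y)|^p\,dx$ stays below $(1-2^{-(p-2)/2})\|w_0\|_p^p$ uniformly in $y$. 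I would control this deficit by means of the exponential decay $|w_0(x)|\le B_0e^{-|x|}$ in (\ref{exp1w}); hypothesis $(C_2)$, $r_{b_1}>2$, is precisely what makes the correction small enough. Taking the supremum over $y$ then yields $n_{0,b_0}\le\sup_y I_{0,b_0}(\gamma_0(y))<2m^{\infty}$.

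The main obstacle is the strict lower bound. The weight comparison only delivers $n_{0,b_0}\ge m^{\infty}$, and turning this into a strict inequality genuinely requires the topological (degree/barycenter) content of Bahri--Li together with the non-attainment of $m^{\infty}$ forced by $(C_1)$; the delicate points are verifying that almost-minimizers concentrate so that $\beta$ is well defined, and that the boundary degree is nonzero. The upper bound, by contrast, is a single test-path computation whose only real work is the quantitative decay estimate governed by $(C_2)$.
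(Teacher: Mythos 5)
Your treatment of the lower bound is essentially correct, but it rebuilds machinery the paper gets for free. The paper's proof of this lemma consists of two citations: by Bahri--Li the min-max level is \emph{attained}, i.e.\ there is a solution $u_0$ of the limit problem with weight $b_0=b_1$ satisfying $J_{0,b_0}(u_0)=n_{0,b_0}<2m^{\infty}$; and by $(C_1)$ that problem has no minimum-energy solution. Since $b_1\le 1$ gives the soft bound $n_{0,b_0}\ge m^{\infty}$ (exactly as in your first paragraph), equality $n_{0,b_0}=m^{\infty}$ would make $u_0$ a ground state, which $(C_1)$ forbids --- your own non-attainment observation. So strictness of the lower bound follows from \emph{attainment of $n_{0,b_0}$} plus \emph{non-attainment of $m^{\infty}$}; the barycenter/degree construction you sketch is part of the internal proof of the Bahri--Li theorem, not something this lemma needs to redo, and as you yourself note its delicate steps (concentration of almost-minimizers, nonzero boundary degree) are left unverified in your sketch.

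The genuine gap is the upper bound. Your algebraic reduction is right: along the one-bubble path you need $\int_{\R^N} b_1|w_0(x-y)|^p\,dx>2^{-(p-2)/2}\|w_0\|_p^p$ uniformly in $y$, i.e.\ the deficit $\int_{\R^N}(1-b_1)|w_0(x-y)|^p\,dx$ must stay below $\bigl(1-2^{-(p-2)/2}\bigr)\|w_0\|_p^p$. But $(B_1)$ and $(C_2)$ control only the \emph{decay rate} of $1-b_1$, not its \emph{amplitude}: $c_0$ may be arbitrarily close to $1$, and at $y=0$ the deficit $c_0\int e^{-r_{b_1}|x|}|w_0|^p\,dx$ is a fixed positive fraction of $\|w_0\|_p^p$ (it cannot be made small by $r_{b_1}>2$ alone), whereas the required margin $1-2^{-(p-2)/2}$ tends to $0$ as $p\downarrow 2$. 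So for $p$ near $2$, or $c_0$ near $1$, the supremum along your test path can exceed $2m^{\infty}$: the single translating bubble is not an adequate competitor, and your reading of $(C_2)$ as ``what makes the one-bubble correction small'' is mistaken. The actual role of $r_{b_1}>2$ is the Bahri--Li \emph{two-bubble} condition: one inserts into the min-max class configurations with two bubbles a distance $R$ apart, where the attractive interaction lowers the energy by an amount of order $e^{-R}R^{-(N-1)/2}$, while the weight penalty of a bubble centred at distance $R/2$ from the origin costs only $O\bigl(e^{-\min\{r_{b_1},p\}R/2}\bigr)$; since $r_{b_1}>2$ and $p>2$, the gain dominates for $R$ large, yielding $n_{0,b_0}<2m^{\infty}$. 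The paper sidesteps this entirely by quoting the Bahri--Li theorem; a self-contained argument would have to reproduce the two-bubble interaction estimate, not a one-bubble test path.
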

	\begin{proof}
		From the results of Bahri and Li \cite{BahriLi} we have that the equation $ (E_ {0, b_0}) $ admits at least one solution  $u_0$ with $J_{0,b_0}(u_0)=n_{0,b_0}< 2m^{\infty}. $ In addition, by the condition $ (C_1) $, the equation $ (E_{0, b_0}) $ does not have a minimum energy solution. Like this, $m^{\infty}< n_{0,b_0}< 2m^{\infty}$.
	\end{proof}
	
	\begin{theorem}\label{teo7.3}
		Let $ \lambda_0 $ and $ \mu_0 $ be as in Lemma \ref{Lemma5.5}. Then there will be positive values $\tilde{\lambda_0}\leq\lambda_0$,  $\tilde{\mu_0}\leq \mu_0$ and $\tilde{\nu_0}\leq \nu_0$ such that for $\lambda \in (0, \tilde{\lambda_0})$, $\mu \in (0, \tilde{\mu_0})$ and $||  a_-  ||_{ q^*}<\nu_0, $ we have	
		$$  m^+_{a_{\lambda}, b_{\mu}}+m^{\infty} < n_{a_{\lambda}, b_{\mu}} < m^-_{a_{\lambda}, b_{\mu}} +m^{\infty}.  $$
		In addition, $ (P_1) $ admits a solution $v_{a_{\lambda}, b_{\mu}}$ with
		$$J_{a_{\lambda}, b_{\mu}}(v_{a_{\lambda}, b_{\mu}})=n_{a_{\lambda}, b_{\mu}}.$$
		
	\end{theorem}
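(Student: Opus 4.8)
The plan is to split the argument into two independent tasks: first, to trap the min-max level $n_{a_{\lambda},b_{\mu}}$ strictly inside the compactness window $(m^+_{a_{\lambda},b_{\mu}}+m^{\infty},\, m^-_{a_{\lambda},b_{\mu}}+m^{\infty})$ by a purely quantitative estimate, and second, to run the Bahri--Li min-max to produce a critical point exactly at that level, using Lemma \ref{Lemmadecompacidade} for the compactness that the window guarantees.

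For the two-sided inequality I would combine the explicit bounds (\ref{7.2}) and (\ref{7.3}) with the strict estimate $m^{\infty}<n_{0,b_0}<2m^{\infty}$. For the lower bound, recall from Lemma \ref{teorema3.1}(ii) that $m^+_{a_{\lambda},b_{\mu}}<0$, whence $m^+_{a_{\lambda},b_{\mu}}+m^{\infty}<m^{\infty}$. Fix $\rho\in(0,1)$ so small that $(1-\rho)^{p/(p-2)}n_{0,b_0}>m^{\infty}$, which is possible because $n_{0,b_0}>m^{\infty}$; then let $\lambda,\mu\to0$, so that the right-hand side of (\ref{7.2}) tends to $(1-\rho)^{p/(p-2)}n_{0,b_0}$ and hence stays above $m^{\infty}$. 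This yields $n_{a_{\lambda},b_{\mu}}>m^{\infty}>m^+_{a_{\lambda},b_{\mu}}+m^{\infty}$ once $\lambda,\mu$ are below a first threshold. For the upper bound, set $\eta:=2m^{\infty}-n_{0,b_0}>0$; shrinking $\rho$, $\lambda$ and $\|a_-\|_{q^*}$ makes the right-hand side of (\ref{7.3}) less than $n_{0,b_0}+\eta/2=2m^{\infty}-\eta/2$. On the other hand, as $\lambda,\mu\to0$ the functional $J_{a_{\lambda},b_{\mu}}$ degenerates to $J_{a_0,b_0}$, so the argument of Lemma \ref{5.1.a} gives $m^-_{a_{\lambda},b_{\mu}}\to m^{\infty}$, and therefore $m^-_{a_{\lambda},b_{\mu}}+m^{\infty}>2m^{\infty}-\eta/2>n_{a_{\lambda},b_{\mu}}$ after one further shrinking. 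Taking $\tilde\lambda_0\le\lambda_0$, $\tilde\mu_0\le\mu_0$, $\tilde\nu_0\le\nu_0$ to be the minima of the thresholds so obtained produces the claimed chain of inequalities.

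For the existence part I would work with $I_{a_{\lambda},b_{\mu}}(u)=\sup_{t\ge0}J_{a_{\lambda},b_{\mu}}(tu)=J_{a_{\lambda},b_{\mu}}(t^-(u)u)$ on $\mathcal{B}$, whose critical points correspond to points $t^-(u)u\in M^-_{a_{\lambda},b_{\mu}}$ solving $(P_1)$, and treat $n_{a_{\lambda},b_{\mu}}=\inf_{\gamma\in\Gamma_{a_{\lambda},b_{\mu}}}\sup_x I_{a_{\lambda},b_{\mu}}(\gamma(x))$ as the Bahri--Li level for the admissible class whose boundary datum consists of the normalized escaping translates $w_k/\|w_k\|_A$. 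The linking is supplied by the barycenter field $\beta(u)=\int_{\R^N}\frac{x}{|x|}(|\nabla u|^2+u^2)\,dx$: by Lemma \ref{Lemma5.5}, $\beta\neq0$ throughout $\{u\in M^-_{a_{\lambda},b_{\mu}}:J_{a_{\lambda},b_{\mu}}(u)<m^+_{a_{\lambda},b_{\mu}}+m^{\infty}\}$, while on $\partial B^N(0,k)$ the direction of $\beta\circ\gamma$ coincides, for $k$ large, with $y\mapsto y/|y|$, a degree-one self-map of $S^{N-1}$. Consequently no $\gamma$ can be deformed entirely below the level $m^+_{a_{\lambda},b_{\mu}}+m^{\infty}$, since that would extend a degree-one boundary map to a map of $\overline{B^N(0,k)}$ into $\R^N\setminus\{0\}$; hence the class is nontrivial and $\sup_x I_{a_{\lambda},b_{\mu}}(\gamma(x))\ge m^+_{a_{\lambda},b_{\mu}}+m^{\infty}$ for every $\gamma$.

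Finally, I would argue that $n_{a_{\lambda},b_{\mu}}$ is a critical value. Because $n_{a_{\lambda},b_{\mu}}$ lies strictly inside $(m^+_{a_{\lambda},b_{\mu}}+m^{\infty},\,m^-_{a_{\lambda},b_{\mu}}+m^{\infty})$, any $(PS)_{n_{a_{\lambda},b_{\mu}}}$ sequence of $J_{a_{\lambda},b_{\mu}}$ in $M^-_{a_{\lambda},b_{\mu}}$ converges, by Lemma \ref{Lemmadecompacidade}, to a nonzero solution $v_{a_{\lambda},b_{\mu}}$ with $J_{a_{\lambda},b_{\mu}}(v_{a_{\lambda},b_{\mu}})=n_{a_{\lambda},b_{\mu}}$. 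To manufacture such a sequence I would invoke the deformation lemma: if $n_{a_{\lambda},b_{\mu}}$ were a regular value, the deformation (available since $(PS)$ holds at this level) would push a near-optimal $\gamma$ to $\sup_x I_{a_{\lambda},b_{\mu}}<n_{a_{\lambda},b_{\mu}}$, and it would leave the boundary datum fixed because $I_{a_{\lambda},b_{\mu}}(w_k/\|w_k\|_A)\to m^{\infty}<n_{a_{\lambda},b_{\mu}}$ for $k$ large, so the deformed map would remain in $\Gamma_{a_{\lambda},b_{\mu}}$ and contradict the definition of $n_{a_{\lambda},b_{\mu}}$. I expect the genuine difficulty to be this topological core of the Bahri--Li scheme --- verifying that the barycenter degree really obstructs the deformation and that the flow can be chosen to respect simultaneously the constraint $\mathcal{B}$ and the prescribed boundary values --- rather than the energy bounds, which are routine once the limits $m^-_{a_{\lambda},b_{\mu}}\to m^{\infty}$ and $m^+_{a_{\lambda},b_{\mu}}<0$ are in place.
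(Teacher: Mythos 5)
Your proposal is correct and follows essentially the same route as the paper's proof: both trap $n_{a_{\lambda},b_{\mu}}$ strictly inside the window $(m^+_{a_{\lambda},b_{\mu}}+m^{\infty},\,m^-_{a_{\lambda},b_{\mu}}+m^{\infty})$ by combining Lemma \ref{Lemma7.1}(ii) with the bounds (\ref{7.2})--(\ref{7.3}), the strict inequalities $m^{\infty}<n_{0,b_0}<2m^{\infty}$ and $m^+_{a_{\lambda},b_{\mu}}<0$, shrinking $\lambda$, $\mu$ and $\|a_-\|_{q^*}$, and then invoke Lemma \ref{Lemmadecompacidade} to convert a $(PS)_{n_{a_{\lambda},b_{\mu}}}$ sequence into the solution $v_{a_{\lambda},b_{\mu}}$. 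The only deviations are cosmetic: you obtain $m^-_{a_{\lambda},b_{\mu}}\to m^{\infty}$ by a continuity argument modeled on Lemma \ref{5.1.a} where the paper reads it off directly from the explicit two-sided Lemma \ref{Lemma7.1}(ii) estimates on $m^-_{a_{\lambda},b_{\mu}}$, and you spell out the Bahri--Li barycenter-degree and deformation machinery (via Lemma \ref{Lemma5.5}) that the paper compresses into its citation of the min-max argument and of Lemma \ref{Lemmadecompacidade}.
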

	\begin{proof}
		By Lemma \ref{Lemma7.1}(ii), we have for $0<\rho<1$
		$$  m^-_{a_{\lambda}, b_{\mu}}\geq   \frac{(1  -  \rho)^{\frac{p}{p-2}}}{(1+  \mu  || b_2/b_1 ||_{  \infty})^{\frac{2}{p-2}}}m^{\infty} -\frac{  2  - q}{2q}( \rho  S_p )^{\frac{q}{q-2}}(\lambda  ||  a_+  ||_{ q^*})^{\frac{2}{2 - q}} $$
		and	
		$$m^-_{a_{\lambda}, b_{\mu}} \leq  (1  +  \rho)^{\frac{p}{p-2}}m^{\infty} + \frac{  2  - q}{2q}( \rho S_p)^{\frac{q}{q-2}}(\lambda  ||  a_+  ||_{ q^*} + ||  a_-  ||_{ q^*}   )^{\frac{2}{2 - q}}.$$
		For each $ \epsilon> 0 $ there are positive values $\tilde{\lambda_1}\leq\lambda_0$,  $\tilde{\mu_1}\leq \mu_0$ and $\nu_1$ such that $\lambda \in (0, \tilde{\lambda_1})$, $\mu \in (0, \tilde{\mu_1})$ and $||  a_-  ||_{ q^*}<\nu_1, $ we have
		$$ m^{\infty} - \epsilon < n_{a_{\lambda}, b_{\mu}} <  m^{\infty} + \epsilon.  $$
		Then,	
		$$ 2m^{\infty} - \epsilon < n_{a_{\lambda}, b_{\mu}}+m^{\infty} <  2m^{\infty} + \epsilon.  $$
		Using \ref{7.2} and \ref{7.3}, for all $\delta>0$, there will be positive values $\tilde{\lambda_2}\leq\lambda_0$,  $\tilde{\mu_2}\leq \mu_0$ and $\nu_2$ such that for $\lambda \in (0, \tilde{\lambda_2})$, $\mu \in (0, \tilde{\mu_2})$ and $||  a_- ||_{ q^*}<\nu_2, $ we have
		
		$$ n_{0,b_0} - \delta < n_{a_{\lambda}, b_{\mu}} <  n_{0,b_0} + \delta.  $$
		Fixing small values of $0<\epsilon<( 2m^{\infty} - n_{0,b_0}  )/2$, and being $ m^{\infty}   < n_{0, b_0}<2m^{\infty}, $ and choosing $\delta>0$ so that for $ \lambda<\tilde{\lambda_0}=\min\{\tilde{\lambda_1},\tilde{\lambda_2}\}$, $ \mu < \tilde{\mu_0} = \min\{\tilde{\mu_1},\tilde{\mu_2}\}$ and $||  a_-  ||_{ q^*}<\nu_0=\min\{\nu_1,\nu_2\}$, we will have
		$$m^+_{a_{\lambda}, b_{\mu}} + m^{\infty} < m^{\infty}  < n_{a_{\lambda}, b_{\mu}}<   2m^{\infty} - \epsilon < m^-_{a_{\lambda}, b_{\mu}}+m^{\infty} .  $$
		Thus, by Proposition \ref{Lemmadecompacidade}(ii), we obtain that the problem $ \Plm $ has a solution $v_{a_{\lambda}, b_{\mu}}$ with
		$$J_{a_{\lambda}, b_{\mu}}(v_{a_{\lambda}, b_{\mu}})=n_{a_{\lambda}, b_{\mu}}.$$
		
	\end{proof}
	
	\textbf{Proof of Theorem \ref{teo1.2}:} With the result of theorem \ref{teo7.3} we can complete the proof of theorem \ref{teo1.2}. For $\lambda \in (0, \tilde{\lambda_0})$, $\mu \in (0, \tilde{\mu_0})$ and $||a_- ||_{ q^*}<\nu_0 $, also using the results presented in the introduction about the existence of the first three solutions and \ref{teo7.3}, we obtain that the equation $\Plm $ admits at least four solutions.
	
	\bibliographystyle{unsrt}  
	%\bibliography{references}  %%% Remove comment to use the external .bib file (using bibtex).
	%%% and comment out the ``thebibliography'' section.

	%%% Comment out this section when you \bibliography{references} is enabled.

\begin{thebibliography}{1}
		
		\bibitem{ClauFig}
		Alves, C. O. and Figueiredo, G.M. 
		\newblock Multiple Solutions for a Semilinear Elliptic Equation with Critical Growth and Magnetic Field.
		\newblock In {\em Milan J. Math}, 82.2, pages 389-405, 2014.
		
		\bibitem{AmbBre} 
		Ambrosetti,A., Brezis,H. and Cerami,G.
		\newblock Combined effects of concave and convex nonlinearities in some elliptic problems,
		\newblock In {\em J. Func. Anal.}, 122.2, pages 519-543, 1994.
		
		\bibitem{ArSz} 
		Arioli, G. and Szulkin, A. 
		\newblock A semilinear Schrödinger equation in the presence of a magnetic field.
		\newblock In {\em Archive for Rational Mechanics and Analysis}, 170.4, pages 277-295, 2003.
		
		\bibitem{BahriLi} 
		Bahri,A. and Li,Y.Y., 
		\newblock On a Min-Max Procedure for the Existence of a Positive Solution for Certain Scalar Field Equations in $\R^N$.
		\newblock In {\em Rev. Mat. Iberoamericana}, 6.1, pages 1-15, 1990.
		
		\bibitem{BerLions}
		Berestycki, H., Lions, P. L. 
		\newblock Nonlinear scalar field equations, I existence of a ground state.
		\newblock In {\em Archive for Rational Mechanics and Analysis}, 82.4, pages 313-345, 1983.
		
		\bibitem{BW} 
		Brown, K.J., and Wu,T.F. 
		\newblock A fibering map approach to a semilinear elliptic boundary value problem.
		\newblock In {\em J. Differential Equations}, v. 2007, pages 1-9, 2007.
		
		\bibitem{BZ2003} 
		Brown, K.J. and Zhang, Y.  
		\newblock The Nehari monifold for a semilinear elliptic problem with a sign changing weight function.
		\newblock In {\em J. Diferencial Equations}, 193.2, pages 481-499, 2003.
		
		\bibitem{B}
		Brown, K. J.  
		\newblock The Nehari manifold for a semilinear elliptic equation involving a sublinear term. 
		\newblock In {\em Calculus of Variations and Partial Differential Equations}, 22.4, pages 483-494, 2004.
		
		\bibitem{ChabSzul}
		Chabrowski,J. and Szulkin,A., 
		\newblock On the Schrödinger equation involving a critical Sobolev exponent and magnetic field.
		\newblock In {\em Topol. Meth. Nonl. Anal.}, 25, pages 3-21, 2005.
		
		\bibitem{Chen}
		Chen,K.J., 
		\newblock On multiple solutions of concave and convexe nonlinearities in elliptic equation on RN.
		\newblock In {\em BVP}, ID 147008, pages 1-19, 2009.
		
		\bibitem{Cingolani2} 
		Cingolani,S., Jeanjean,L. and Secchi,S. 
		\newblock Multi-peak solutions for magnetic NLS equations without non-degeneracy conditions.
		\newblock In {\em  ESAIM Control Optim. Calc. Var.}, 15.3, pages 653-675, 2009.
		
		\bibitem{doc1}
		de Paiva,F.O., de Souza Lima, S.M. and Miyagaki, O.H.  
		\newblock  Existence and regularity for a Schrödinger equation with magnetic potential.
		Submited 2018.
		
		\bibitem{DL}
		Ding,Y. and Liu,X.Y., 
		\newblock Semiclassical solutions of Schrödinger equations with magnetic fields and critical nonlinearities.
		\newblock In {\em  Manuscripta Math.}, pages 1-32, 2013.
		
		\bibitem{EstLions} 
		Esteban, M. J. and Lions, P. L. 
		\newblock Stationary solutions of nonlinear Schrödinger equations with an external magnetic field.
		\newblock In {\em PDE and Calculus of Variations, in honor of E. De Giorgi}, Boston: Birkhauser, pages 401-449, 1990.
		
		\bibitem{GNN} 
		Gidas, B. Nirenberg, 
		\newblock Symmetry of positive solutions of nonlinear elliptic equations in $\R^N$.
		\newblock In {\em  Adv. Math. Suppl. Stud.}, 7, pages 369-402, 1981.
		
		\bibitem{Hewitt} 
		Hewitt, E.  and Stromberg, K. 
		\newblock Real and Abstract Analysis.
		\newblock In {\em Berlin: Springer-Verlag}, 1975.
		
		\bibitem{Hsu} 
		Hsu,T.S. and Lin,H.L., 
		\newblock Three positive solutions for semilinear elliptic problems involving concave and convex nonlinearities.
		\newblock In {\em Proc. Roy. Soc. Edinburgh}, Sect. A 142.1, page 115, 2012.
		
		\bibitem{Hsu1} 
		Hsu,T.S 
		\newblock Multiple positive solutions for a class of concave-convex semilinear elliptic equations in unbounded domains with sign-changing weights.
		\newblock In {\em BVP} 2010.1, pages 856932, 2010.
		
		\bibitem{HWW} 
		Huang,Y., Wu,T.F. and Wu,Y., 
		\newblock Multiple positive solutions for a class of concave-convex elliptic problems in $\R^N$ involving sign-changing weight, II.
		\newblock In {\em  Commun. Contemp. Math.}, 17.05, 1450045, 2015.
		
		\bibitem{Kurata}
		Kurata, K.
		\newblock Existence and semi-classical limit of the least energy solution to a nonlinear Schrödinger equation with electromagnetic fields.
		\newblock In {\em  Nonlinear Anal.}, 41 Multiple positive solutions for a class of concave-convex semilinear elliptic equations in unbounded domains with sign-changing weights.
		\newblock In {\em BVP} 2010.1, pages 763–778, 2000.
		
		\bibitem{KWONG} 
		Kwong,M.K., 
		\newblock Uniqueness of positive solutions of $\Delta u- u + u^p = 0$ in $\R^n$.
		\newblock In {\em Archive for Rational Mechanics and Analysis}, 105.3, pages 243-266, 1989.
		
		\bibitem{Rab} 
		Rabinowitz,P.H., 
		\newblock Minimax methods in critical point theory with applications to differential equations.
		\newblock In {\em   American Mathematical Soc.},No. 65., Providence, Rhode Island, 1986.
		
		\bibitem{TZ}
		Tang, Z.W. 
		\newblock Multi-bump bound states of nonlinear Schrödinger equations with electromagnetic fields and critical frequency.
		\newblock In {\em  J. Differential Equations}, 245.10, pages 2723-2748, 2008.
		
		\bibitem{willem} 
		Willem,M., 
		\newblock Minimax Theorems.
		\newblock In {\em  Basel: Birkhäuser}, 1996.
		
		\bibitem{Wu}
		Wu, T.F.  
		\newblock Multiple positive solutions for a class of concave–convex elliptic problems in $\R^N$  involving sign-changing weight.
		\newblock In {\em  J. Functional Analysis}, 258.1, pages 99-131, 2010. 
		
		\bibitem{Wu2} 
		Wu,T.F., 
		\newblock On semilinear elliptic equations involving concave-convex nonlinearities and sign-changing weight function.
		\newblock In {\em  J. Math. Anal. Appl.}, 318, pages 253-270, 2006.
		
	\end{thebibliography}

\end{document}